\newtheorem{theorem}{Theorem}
\newtheorem{definition}[theorem]{Definition}
\newtheorem{lemma}[theorem]{Lemma}
\newtheorem{proposition}[theorem]{Proposition}
\newenvironment{proof}[1][Proof]{\noindent\textbf{#1.} }{\ \rule{0.5em}{0.5em}}
\newcommand{\nat}{\mathbb N}
\newcommand{\re}{\mathbb R}
\newcommand{\La}{\mathcal L}
\newcommand{\U}{\mathcal U}
\newcommand{\Om}{\Omega}
\newcommand{\lmb}{\lambda}
\newcommand{\Lmb}{\Lambda}
\newcommand{\alp}{\alpha}
\newcommand{\kp}{\kappa}
\begin{document}

\title{\textbf{Popper Functions, Lexicographical Probability, and Non-Archimedean Probability}\thanks{We are indebted to Paul Pedersen, Stanislav Speranski, Sylvia Wenmackers, and two anonymous referees for helpful comments on earlier versions of this article. The first author's research was supported by an ESPRC scholarship. }}
\author{Hazel Brickhill \& Leon Horsten (University of Bristol)}

\maketitle

\begin{abstract}

\noindent Standard probability theory has been extremely successful but there are some conceptually possible scenarios, such as fair infinite lotteries, that it does not model well. For this reason alternative probability theories have been formulated. We look at three of these: Popper functions, a specific kind of non-Archimedean probability functions, and lexicographic probability functions. We relate Popper functions to non-Archimedean probability functions (of a specific kind) by means of a representation theorem: every non-Archimedean probability function is infinitesimally close to some Popper function, and vice versa. We also show that non-Archimedean probability functions can be given a lexicographic representation. Thus Popper functions, a specific kind of non-Archimedean probability functions, and lexicographic probability functions triangulate to the same place: they are in a good sense interchangeable.

 \end{abstract}


\section*{Introduction}

Popper functions were introduced in \cite{Popper}. They have been used quantitatively to describe probabilistic concepts and situations that fall outside the scope of classical probability theory. In particular, Popper functions have proved useful in modelling learning from evidence, indicative and counterfactual conditionals, decision and utility theory \cite{McGee} \cite{Leitgeb}. 

Popper functions are typically defined axiomatically, by a list of properties that these functions are required to satisfy. Of course such presentations are accompanied by demonstrations that the list of axioms is consistent.

It is immediate from the axioms of Kolmogorov that a concept of probability is thereby defined: the Kolmogorov axioms represent intuitive properties of an informal concept of probability. This is not so for the axioms that define Popper functions. These axioms implicitly define a conditional function on a class of propositions. But it is not immediate from the Popper function axioms that they define a concept of conditional \emph{probability}. It is difficult to see which models are allowed by the axioms governing Popper functions. So one may wonder whether there are some Popper functions that really cannot be taken to represent probability. It has been argued that the notion of conditional probability is actually prior to its definition in terms of absolute probability \cite{Hajek 2003}, and so the idea of Popper functions is a reasonable one. But how can we tell that Popper hit upon the right set of axioms?

One principal aim of the present article is to make a case for the thesis that Popper functions do capture a pre-theoretical concept of quantitative probability. We do this by relating Popper functions to a class of functions for which it is more immediate that they capture an informal concept of quantitative probability: we will relate Popper functions to non-Archimedean probability functions.

The idea of infinitely small numbers (infinitesimals) goes back at least to Leibniz and Newton. It was shown to be mathematically coherent in the middle of the last century by Abraham Robinson \cite{Robinson 1961}. 

The concept of fair lotteries on infinite sample spaces motivates the application of infinitesimals to the theory of probabilities. In particular, for such a lottery it seems prima facie reasonable to assign a non-zero but infinitesimally small probability value to  every proposition that states that a given ticket is the winning one. A property that one might want to impose on non-Archimedean probability functions in infinite lottery situations is \emph{regularity}, i.e., the condition that only the impossible event receives probability 0. In this article, we focus on one particular form that such a non-Archimedean probability theory can take, namely the theory that was developed in \cite{NAP} and \cite{W&H}.\footnote{There are of course other approaches to non-Archimedean probability: see for instance \cite{Nelson 1987}. For a defence of the thesis that the theory that we are focussing on in this article constitutes a good framework for developing a theory of infinitesimal probabilities, see \cite{BHW forthc}.}

A key difference between Popper functions and non-Archimedean probability functions is that the former take values in the real $[0,1]$ interval whereas the latter take values in the $[0,1]$ interval of a non-Archimedean extension of $\mathbb{R}$. Yet we will show that Popper functions are closely related to non-Archimedean probability functions.
We will prove a \emph{representation theorem} that relates regular non-Archimedean probability functions to Popper functions (section 3). On the one hand, for every finitely additive Popper function, there is a regular non-Archimedean probability function that is point-wise infinitesimally close to it. On the other hand, for every regular non-Archimedean function there is a Popper function that is point-wise infinitesimally close to it.
The latter is fairly trivial; the former is not.

One direction of the theorem shows that Popper functions cannot do anything that non-Archimedean probability functions cannot do. So if one believes that non-Archimedean probability functions (as explicated in \cite{NAP}) model an intuitive notion of probability well, then Popper functions must too: while they may not give us as much information, they cannot go against our intuitions concerning concepts of probability to any greater extent than non-Archimedean probability functions. The converse direction of the theorem shows that that in all the applications of Popper functions to describe situations or elucidate concepts, the notion of  non-Archimedean probability may also be employed.

There are still reasons one may prefer non-Archimedean probability functions over Popper functions or vice versa: for instance, a non-Archimedean probability function will enable us to compare probabilities at a much finer level than a Popper function, distinguishing between events whose probabilities only differ by infinitesimal amounts. On the other hand, one may argue that this level of detail is uncalled for and not justified by our intuitions. But neither can go too far wrong if you believe the other is correct.

Non-Archimedean probability functions are themselves not completely intuitive. One source of mystery is the non-wellfoundedness of the degrees of infinitesimality that such probability functions entail. This seems to open the prospect that the probability of one event can be smaller than that of another event even though there is no largest degree of infinitesimality at which they differ. This would of course make such functions very hard to picture.

In response to this, we prove a second representation theorem (section 4). This second theorem relates the non-Archimedean probability theories under consideration to lexicographical probability functions as discussed in \cite{Blume et al 1991} and in \cite[sections 1.--3.]{Halpern 2010}. In particular, we show that despite the non-wellfounded structure of the range of non-Archimedean probability functions, probability values can be ordered lexicographically. This phenomenon gives us deeper insight into the structure of non-Archimedean probability functions. Conversely, it links the applications of lexicographic probability theory to non-Archimedean probability. In particular, it suggests ways in which non-Archimedean probability theory can be connected to utility and decision theory by supplying hierarchical orderings of `null events'.\footnote{The connection between utility theory and non-Archimedean probability theory for infinite state spaces is investigated in \cite{Pedersen forthc.}.}

In sum, the representation theorems that are proved in this article are intended so shore up the foundations of all three representations of probability. If one has concerns about any two of the three theories investigated in this article, but not about the third, then the results of this paper offer some reassurance. We have a virtuous triangle of representations: starting anywhere on the triangle, one can get anywhere else.


In the proofs of our representation theorems, we will be building on and extending results in this area that are explained and discussed in \cite{Halpern 2010}. The construction of the non-Archimedean probability functions uses mathematical tools that some readers may not be familiar with ---in particular ultrafilters and ultraproduct constructions. In section 2 where we introduce these functions we give the precise definitions but also describe the basic properties that result, and the short proofs here give a flavour of how these are used. The proofs of the main theorems in the paper are technical, but only an understanding of the statements of the theorems is necessary to grasp their philosophical import. 


\section{Popper functions}

\subsection{Axiomatic description}

Popper functions were originally introduced so that conditional probabilities could be assigned where the event conditioned on would receive absolute probability zero. They are both a weakening and an extension of classical probability. Countable additivity is not imposed (it can be, but then Popper functions become less useful for modelling situations, like the infinite lotteries alluded to earlier, which classical probability cannot model), and additional conditional probabilities are defined. Popper functions are introduced \emph{axiomatically}, the suggestion being that these are the right axioms to model conditional probability. We follow the presentation in \cite{Leitgeb}.

For some propositional language $\La$ let $C:\La \times \La \rightarrow [0,1]$. $C$ is Popper function if it satisfies:
\begin{enumerate}
\item{$C(a,a) = 1$}
\item{If $C(\neg a,a) \neq 1$ then $C(.,a)$ is a probability function (i.e. satisfying the Kolmogorov axioms with finite additivity replacing countable additivity)}
\item{$C(a\wedge b,d)= C(a,d) \times C(b,a \wedge d)$}
\item{If $C(a,b)=C(b,a)=1$ then for all $d\in\La$, $C(d,a)= C(d,b)$}
\end{enumerate}
The intended interpretation of $C(a,b)$ here is the probability of $a$ occurring conditional on $b$ occurring. It is not immediately obvious that conditional probabilities should satisfy these axioms but a little work shows they do, at least in intuitive situations. (If we have $C(\neg a,a) = 1$ then it can be shown that $C(.,a)$ is the constant function 1, so that $a$ is to be interpreted as impossible or contradictory event). Also note that absolute probabilities can be retrieved from a Popper function by taking the probability conditional on a tautology.

Of course none of this would be of any significance if the axioms governing Popper functions were inconsistent. But it can be shown that on all sample spaces, finite and infinite, Popper functions can be defined. 


\subsection{Uses}

Popper functions do solve some of the problems outlined above for the standard theory of probability. In particular, the main selling point of Popper functions is of course reclaiming the conditional probabilities that are intuitively justified. Also, certain situations  that cannot be modelled with standard probability functions (such as a fair lottery on $\nat$), can be consistently modelled using a Popper function \cite{VanFraassen}.

For these reasons, Popper functions have been quite useful in philosophy. In Bayesian theories of belief revision under new evidence, a rational agent will start with a credence function that obeys the standard laws of probability. Then this credence is revised under new evidence $E$, so that the new credence of a proposition is the old credence of that proposition conditional on $E$. But in standard probability theory, if the evidence $E$ has probability zero, such conditionals are undefined. Should we be worried about how to rationally react to evidence that has probability zero? Well, given that this evidence may still be  possible, we should. After all, a theory of rationality is not primarily a practical theory: what really concerns us is what it means to be rational, so we should take all possibilities into account \cite{Levi 1989}. One obvious way of doing this is to use Popper functions.\footnote{For a defence of the role of Popper functions in satisfactory theories of conditionalisation on evidence, see e.g. \cite{Harper 1975}. For a discussion of the limitations of the usefulness of Popper functions for such purposes, see \cite{Pruss 2015}.}

Further uses for Popper functions include theories of indicative and counterfactual conditionals. For an application of Popper functions to the theory of indicative conditionals, the reader may consult {\cite{McGee}; for the use of Popper functions in the theory of counterfactual conditionals, see \cite{Leitgeb}.


However, this usefulness really depends on our being willing to accept that Popper functions provide a good model of conditional probability. And it is not clear that we should be. The problem of probabilities of infinite sets not depending on the probabilities of their component subsets is even worse here than in the standard picture, as we don't even have countable additivity. It seems that the probabilities could somehow be arbitrary, floating free of the probabilities of point-events that make them up. And if so, this cannot be consistent with our intuitive notion of probability. But is that really possible? That is one of the questions this paper seeks to answer, by showing how Popper functions are closely tied to a richer alternative theory: a theory of non-Archimedean probability.


\section{Non-Archimedean probability functions}

From now on, we will focus on one particular non-Archimedean probability, namely the theory that is called $NAP$, which was developed in \cite{NAP}.

In $NAP$, the unconditional probability of an event is defined in terms of the conditional probability of an event. Loosely speaking, the probability $P(A)$ of event $A$ is conceived of as the \emph{limit} of $P(A \mid \lambda)$, for the finite set $\lambda$ ``tending toward infinity'': $P(A)$ is conceived of as the limit of the relative frequency of $A$'s on finite snapshots of the sample space.

In this section, we sketch how $NAP$ functions are constructed, and what their basic properties are.\footnote{For details of the proofs and constructions, see \cite{NAP}. The only difference between the constructions in \cite{NAP} and the constructions in this article is that in the present article, the constructions are carried out using filters instead of ideals. But it is easily verified that the two formulations of the theory are equivalent.}


\subsection{Limits}

Let there be given a sample space $\Omega$. And let $\Lambda$ be a directed (under the inclusion ordering) subset of the collection ${\mathcal{P}}_{fin}(\Omega)$ of finite subsets of $\Omega$ such that $\bigcup \Lambda = \Omega$. For any $\lambda \in \Lambda$ and for any event $A$ (i.e., subset of $\Omega$), $P(A \mid \lambda) \in \mathbb{R}$, where  $P(A \mid \lambda )$ is taken to be defined as in \emph{classical} probability theory using the \emph{ratio formula} (since $\lambda$ is finite):
$$ P(A \mid \lambda) \equiv \frac{\left \vert  A \cap \lambda  \right \vert}{\left \vert \lambda \right \vert} .   $$
Intuitively, ${\mathcal{P}}_{fin}(\Omega)$ contains the finite subsets on which any given event $A$ is ``tested'', and as the test sets move up the ordering, they ``approach infinity''.
We want to conceive of the sought for probability function as a kind of \emph{limit}. 
We are of course interested in the case where $\Omega$  is infinite.

If $\Omega $ is infinite, we take a free ultrafilter $\mathcal{U}_{\Lambda }\ $over $\Lambda $ and we set
\begin{equation}
\mathbb{R^*} \equiv \mathfrak{F}\left( \Lambda ,\mathbb{R}\right) /\mathcal{U}_{\Lambda },
\end{equation}
where $\mathfrak{F}\left( \Lambda ,\mathbb{R}\right)$ is the class of functions from $\Lambda$ to $\mathbb{R}$, and
 $\mathfrak{F}\left( \Lambda ,\mathbb{R}\right) / \mathcal{U}_{\Lambda }$ denotes the set of equivalence classes $\left[ \varphi \right] _{\mathcal{U}_{\Lambda }}\ $with respect to the relation $ \approx _{\mathcal{U}_{\Lambda }}$defined by
\begin{equation*}
\varphi \approx _{\mathcal{U}_{\Lambda }}\psi \Leftrightarrow \exists Q\in \mathcal{U}_{\Lambda },\forall \lambda \in Q,\ \varphi (\lambda )=\psi (\lambda ).
\end{equation*}
}  \label{v1}

The elements of $\mathbb{R^*}$ can then indeed be regarded as `limits':
\begin{definition}[$\Omega$-limit]
\begin{equation}
\lim_{\lambda \uparrow \Omega }\varphi (\lambda ) \equiv \left[ \varphi \right] _{\mathcal{U}_{\Lambda }}.
\end{equation} \label{v2}
\end{definition}

The set $\mathbb{R^*}$ will serve as the range of our Non-Archimedean Probability function. We want to identify $\mathbb{R}$ with a subset of $\mathbb{R^*}$.  Therefore we identify the equivalence class of the function $\varphi _{c}$ with constant value $c$ with the real number $c$. We want to calculate (add, multiply) with the elements of $\mathbb{R^*}$. Therefore we define addition and multiplication on elements of $\mathbb{R^*}$ pointwise. It can then be verified (using standard arguments from non-standard analysis) that $\mathbb{R^*}$ is a (Non-Archimedean) field.


\subsection{Infinitesimal probabilities}

Using this notion of $\Omega$-limit we can now define a non-Archimedean probability function as follows:

\begin{definition}
\begin{equation}
P(A)=\ \underset{\lambda \uparrow \Omega }{\lim }\ P(A \mid \lambda ).
\end{equation}

\end{definition}
The intuitive meaning of  the equation
$P(A)=\ \underset{\lambda \uparrow \Omega }{\lim }\ P(A \mid \lambda )$
is that the probability of an event $A$ is the $\Omega $\emph{-limit} of the conditional probability $P\left( A \mid \lambda \right) $ obtained by a finite sample set $\lambda $. 

It can then be shown that $P$ satisfies the laws for being a finitely additive probability function (except that the value is taken not in $\mathbb{R}$ but in a non-Archimedean field):
\begin{itemize}
\item (NAP1) \textbf{Domain and range. }\textit{The events are \textbf{all} the subsets of }$\Omega $\textit{, which is a finite or infinite sample space. Probability is a total function}
\begin{equation*}
P:\mathcal{P}\left( \Omega \right) \rightarrow \mathbb{R^*}
\end{equation*}
\textit{where }$\mathbb{R^*}$\textit{\ is a superreal field, where a superreal field is an ordered field which contains the real numbers as subfield.} 
\item (NAP2) \textbf{Normalization.}
\begin{equation}
P(\Omega )=1  
\end{equation}

\item (NAP3) \textbf{Additivity.} \textit{If }$A$\textit{\ and }$B$\textit{\ are events and }$A\cap B=\varnothing ,$ \textit{ then}
\begin{equation*}
P(A\cup B)=P(A)+P(B)
\end{equation*}

\end{itemize}



Since the non-Archimedean probability function that is thus defined depends on the initial choice of free ultrafilter $\mathcal{U}$, we should strictly speaking write $P_{\mathcal{U}}$ instead of $P$.


\subsection{Regularity}


We now look at how we can impose a general condition on the limit construction to ensure the probability functions that result are \emph{regular}, i.e., that for such $P$:

\begin{definition}[Regularity]

$\ \forall A\in \mathcal{P}\left( \Omega \right) \setminus \left\{ \varnothing \right\} :$
\begin{equation}
P(A)>0  
\end{equation}

\end{definition}

We have seen that the probability functions $P_{\mathcal{U}}$
that we have defined so far are determined by ultrafilters $\mathcal{U}$ on $\Lambda$ ---which is a directed (under the inclusion ordering) subset of the 
collection $\mathcal{P}_{fin}(\Omega) $. The regularity of $P_{\mathcal{U}}$ can be forced to hold by imposing a condition on the ultrafilter $\mathcal{U}$ on which it is based:

\begin{definition}[fine ultrafilter]
An ultrafilter $\mathcal{U}$ on $\Lambda$ is fine if and only if for every $a\in \Omega$, we have: 
$$ \{ \lambda \in \mathcal{P}_{fin}(\Omega) : a \in \lambda   \}	 \in \mathcal{U}.$$
\end{definition}

Then we immediately have:

\begin{proposition}

If $\mathcal{U}$ is fine, then $P_{\mathcal{U}}$ is regular.

\begin{proof}
Consider any $a  \in \Omega$. Then $P_{\mathcal{U}}(\{a \}) \neq 0$ if and only if $$   \{ \lambda \in \mathcal{P}_{fin}(\Omega): P(\{a \} \mid \lambda) \neq 0 \} \in \mathcal{U}.  $$ But $   \{ \lambda \in \mathcal{P}_{fin}(\Omega): P(\{a \} \mid \lambda) \neq 0 \} = \{ \lambda \in \mathcal{P}_{fin}(\Omega): a \in \lambda  \}, $ which belongs to $ \mathcal{U}  $ by the fineness condition.
\end{proof}

\end{proposition}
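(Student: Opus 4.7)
The plan is to reduce regularity to the positivity of singleton probabilities and then to translate the fineness condition into strict positivity in the ultrapower. Given any nonempty $A \subseteq \Omega$, I would pick some $a \in A$ and argue that $P_{\mathcal{U}}(A) \geq P_{\mathcal{U}}(\{a\})$ by monotonicity. Monotonicity is not itself listed as an axiom, but follows from (NAP3) together with non-negativity: since each classical value $P(B \mid \lambda) = |B \cap \lambda|/|\lambda|$ lies in $[0,1]$, the corresponding $\mathcal{U}$-equivalence class inherits non-negativity in the ordered field $\mathbb{R}^*$, so $P_{\mathcal{U}}(A) = P_{\mathcal{U}}(\{a\}) + P_{\mathcal{U}}(A \setminus \{a\}) \geq P_{\mathcal{U}}(\{a\})$.

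It thus suffices to prove $P_{\mathcal{U}}(\{a\}) > 0$ for every $a \in \Omega$. Next I would write out the representing function $\varphi_a(\lambda) := P(\{a\} \mid \lambda)$ explicitly: by the ratio formula, $\varphi_a(\lambda) = 1/|\lambda|$ when $a \in \lambda$ and $\varphi_a(\lambda) = 0$ otherwise. Consequently the set on which $\varphi_a$ is strictly positive is exactly $\{\lambda \in \Lambda : a \in \lambda\}$, which is the very set appearing in the definition of fineness.

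To finish, I would invoke the standard characterization of the order on the ultrapower $\mathbb{R}^* = \mathfrak{F}(\Lambda,\mathbb{R})/\mathcal{U}$: a class $[\varphi]_{\mathcal{U}}$ is strictly positive precisely when $\{\lambda : \varphi(\lambda) > 0\} \in \mathcal{U}$. Fineness of $\mathcal{U}$ delivers exactly the required membership, so $P_{\mathcal{U}}(\{a\}) = [\varphi_a]_{\mathcal{U}} > 0$, and regularity follows.

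The main obstacle is slight: essentially the whole argument is a dictionary translation between the combinatorial condition of fineness and the algebraic condition of strict positivity in the ultrapower. The only items one has to be careful about are (i) deriving monotonicity from (NAP3) plus non-negativity rather than assuming it, and (ii) making sure $|\lambda|$ in the denominator of the ratio formula is nonzero, which is harmless given that we need only consider $\lambda$ containing the chosen $a$.
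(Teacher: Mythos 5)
Your proof is correct and takes essentially the same route as the paper: both arguments reduce to the singleton case and translate the fineness condition $\{\lambda : a \in \lambda\} \in \mathcal{U}$ directly into the (non-)vanishing of $P_{\mathcal{U}}(\{a\})$ via the ultrapower characterization. You are somewhat more explicit than the paper, which silently leaves the reduction from arbitrary nonempty $A$ to singletons (via additivity and non-negativity) and the step from $P(\{a\}) \neq 0$ to $P(\{a\}) > 0$ to the reader.
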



\subsection{Weights}

The class of non-Archimedean probability functions that we have defined so far are those that are determined by fine (and free) ultrafilters according to the recipe described in the previous sections. But this class is still too narrow, because it turns out that all those functions $P_{\mathcal{U}}$ are \emph{uniform}:

\begin{proposition}

For all fine ultrafilters $\mathcal{U}$ on $\Lambda$, and for all $a,b \in \Omega$: $$P_{\mathcal{U}}(\{a\}) = P_{\mathcal{U}}(\{ b \}).$$

\begin{proof}
$P_{\mathcal{U}}(\{a\}) = P_{\mathcal{U}}(\{ b \})$ if and only if $$ \{ A \in \mathcal{P}_{fin}(\Omega): P(\{a\} \mid A) = P(\{ b \} \mid A) \} \in \mathcal{U}.  $$
But $ \{ A \in \mathcal{P}_{fin}(\Omega): P(\{a\} \mid A) = P(\{ b \} \mid A)\} = $  \\ $   \{ A \in \mathcal{P}_{fin}(\Omega): P(\{a\} \mid A) \leq P(\{ b \} \mid A) \}  \cap   \{ A \in \mathcal{P}_{fin}(\Omega): P(\{a\} \mid A) \geq P(\{ b \} \mid A) \}  $. Now  $   \{ A \in \mathcal{P}_{fin}(\Omega): P(\{a\} \mid A) \leq P(\{ b \} \mid A) \}= $ \\
$  \{   A \in \mathcal{P}_{fin}(\Omega): b \in A  \}  \cup   \{   A \in \mathcal{P}_{fin}(\Omega): a \not \in A \wedge b \not \in A  \}  .$ By fineness and the superset property for ultrafilters, we indeed have $  \{   A \in \mathcal{P}_{fin}(\Omega): b \in A  \}  \cup   \{   A \in \mathcal{P}_{fin}(\Omega): a \not \in A \wedge b \not \in A  \}  \in \mathcal{U}, $ i.e.,  $   \{ A \in \mathcal{P}_{fin}(\Omega): P(\{a\} \mid A) \leq P(\{ b \} \mid A) \}  \in \mathcal{U}.$ Similarly,  $   \{ A \in \mathcal{P}_{fin}(\Omega): P(\{a\} \mid A) \geq P(\{ b \} \mid A) \}  \in \mathcal{U}$. So their intersection must also be in $\mathcal{U}$. 
\end{proof}

\end{proposition}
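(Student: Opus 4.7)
The plan is to unpack what the claimed equality $P_{\mathcal{U}}(\{a\}) = P_{\mathcal{U}}(\{b\})$ means in the ultrapower $\mathbb{R^*} = \mathfrak{F}(\Lambda,\mathbb{R})/\mathcal{U}$. Two equivalence classes $[\varphi]$ and $[\psi]$ are equal precisely when their agreement set $\{\lambda \in \Lambda : \varphi(\lambda) = \psi(\lambda)\}$ lies in $\mathcal{U}$. So the task reduces to showing that
\[
E_{a,b} \equiv \{ \lambda \in \Lambda : P(\{a\} \mid \lambda) = P(\{b\} \mid \lambda)\} \in \mathcal{U}.
\]

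The key observation is that on any $\lambda$ which contains \emph{both} $a$ and $b$, the ratio formula immediately gives $P(\{a\}\mid \lambda) = 1/|\lambda| = P(\{b\}\mid \lambda)$, so such $\lambda$ lies in $E_{a,b}$. In other words, $\{\lambda \in \Lambda : a \in \lambda \text{ and } b \in \lambda\} \subseteq E_{a,b}$. Thus it suffices to show the smaller set is in $\mathcal{U}$, and then invoke the superset property of ultrafilters.

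To place the smaller set in $\mathcal{U}$, I would appeal directly to fineness: $\{\lambda : a \in \lambda\} \in \mathcal{U}$ and $\{\lambda : b \in \lambda\} \in \mathcal{U}$, and ultrafilters are closed under finite intersections, so their intersection $\{\lambda : a \in \lambda \text{ and } b \in \lambda\}$ is in $\mathcal{U}$ as well. Combined with the previous paragraph, this gives $E_{a,b} \in \mathcal{U}$, i.e., $P_{\mathcal{U}}(\{a\}) = P_{\mathcal{U}}(\{b\})$.

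There is no real obstacle here; the only subtle point to flag is the translation from the ultrapower equivalence relation $\approx_{\mathcal{U}}$ to a statement about membership of an agreement set in $\mathcal{U}$, which is the general pattern by which first-order properties are verified in ultraproducts. One could give a slightly longer proof by decomposing $E_{a,b}$ as the intersection of the two one-sided inequality sets and analysing each by cases on whether $a$ and $b$ belong to $\lambda$, but the direct route via the two-element intersection seems cleanest.
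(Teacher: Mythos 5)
Your proof is correct, and it takes a genuinely more direct route than the paper's. The paper decomposes the agreement set $E_{a,b}$ as the intersection of the two one-sided inequality sets $\{\lambda : P(\{a\}\mid\lambda) \leq P(\{b\}\mid\lambda)\}$ and $\{\lambda : P(\{a\}\mid\lambda) \geq P(\{b\}\mid\lambda)\}$, writes each as a union of case sets (e.g.\ $\{\lambda : b \in \lambda\} \cup \{\lambda : a \notin \lambda \wedge b \notin \lambda\}$), places each in $\mathcal{U}$ via fineness plus the superset property, and finishes with closure under intersection. You instead observe that on the single set $\{\lambda : a \in \lambda\} \cap \{\lambda : b \in \lambda\}$ — which is in $\mathcal{U}$ by fineness and finite intersection — the two conditional probabilities are literally equal to $1/|\lambda|$, and then apply the superset property once. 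This is the alternative you yourself flag in your closing remark, and it is shorter: it avoids the case analysis on whether $a$ or $b$ lies in $\lambda$ and never needs the inequality sets at all. The paper's version does yield slightly more information along the way (each one-sided comparison is separately ultrafilter-large, which mirrors how one would prove $P_{\mathcal{U}}(\{a\}) \leq P_{\mathcal{U}}(\{b\})$ in isolation), but for the stated proposition your argument is complete and arguably cleaner. One cosmetic point, shared with the paper: fineness is stated in terms of subsets of $\mathcal{P}_{fin}(\Omega)$ while $\mathcal{U}$ lives on $\Lambda$, so all these sets should be read as intersected with $\Lambda$; this does not affect either argument.
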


But we do not want to build uniformity into the \emph{definition} of our non-Archimedean probability functions. So we allow also probability functions $P_{\mathcal{U}, w}$ that are ``tempered'' by a real-valued \emph{weight function} $w$.
To be more precise, our official definition of non-Archimedean probability functions is as follows:

\begin{definition}[NAP functions]

If $\mathcal{U}$ is a free and fine ultrafilter on a directed set $\Lambda \subseteq \mathcal{P}_{fin}(\Omega)$, and we have a weight function $w: \Omega \mapsto \mathbb{R}^+$, and $P_{\mathcal{U}}$ is as defined in the preceding sections, then $$P(A)=\ \underset{\lambda \uparrow \Omega }{\lim }\ P(A \mid \lambda )$$
is a \textbf{non-Archimedean probability function} (NAP function), where $$P(A \mid \lambda ) \equiv  \frac{\Sigma_{a \in A \cap \lambda}   (w(a))}{\Sigma_  {a \in\lambda}   (w(a))    } .$$ 

\end{definition}

\noindent This then is the class of non-Archimedean probability functions that we will be concerned with in this article. Clearly $P$ is then always a finitely additive probability function, is regular (if $w$ takes strictly positive values everywhere), and may be but need not be uniform.

As before, we should strictly speaking write $     P_{\mathcal{U},w}$ instead of $P$. Often in what follows it will be clear from the context what $w$ and $\mathcal{U}$ are; in such cases, we will omit the subscripts from $     P_{\mathcal{U},w}$.

Our non-Archimedean probability functions $P$ are then determined by a triple $\langle \Omega, \mathcal{U}, w  \rangle$, where

\begin{itemize}
\item $\Omega $ is the sample space;

\item  $\mathcal{U}$ is a free and fine ultrafilter on some directed subset $\Lambda$ of $\mathcal{P}_{fin}(\Omega)$ such that $\bigcup \Lambda = \Omega$;

\item $w:\Omega \rightarrow \mathbb{R}^{+}$ is a weight function.
\end{itemize}
A triple $\langle \Omega, \mathcal{U}, w  \rangle$ is called a \emph{NAP space}.


\subsection{Infinite sums}

The Weierstrass notion of limit allows us to give a rigorous definition of the sum of an infinite sequence. Analogously, the $\Omega $-limit allows the definition of the sum of infinitely many real numbers. In this section we will investigate this operation.

Let $x_{\omega }$ be a family of real numbers indexed by $\omega \in E\subseteq \Omega ;$ the $\Omega $-sum of all ${x_{\omega }}^{\prime }s$ is defined as follows:
\begin{equation}
\Sigma_{\omega \in E}x_{\omega }=\ \underset{\lambda \uparrow \Omega }{\lim } (\Sigma_{\omega \in E\cap \lambda }x_{\omega } )  \label{v4}
\end{equation}
Notice that, since $\lambda $ is always finite, the function
\begin{equation*}
\varphi (\lambda ):=\Sigma_{\omega \in E\cap \lambda }x_{\omega }
\end{equation*}
of $\lambda$ is well defined, yielding always real number as function value. 

Our new type of infinite sum of course differs in certain respects from the usual Weierstrass-sum. First of all, the $\Omega $-sum depends on the choice of a free ultrafilter $\mathcal{U}_{\Lambda }$. This is not the case with the usual series.
So it would actually be more appropriate to write $\Sigma_{\omega \in E;\mathcal{U}_{\Lambda }}x_{\omega }$ rather than $\Sigma_{\omega \in E}x_{\omega }.$ Secondly, 
 the Weierstrass-sum of a series exists only for certain \emph{denumerable} sets of real numbers, while the $\Omega $-sum exists for \emph{every} family of real numbers indexed by $\omega \in E\subseteq \Omega .$ In principle $\Omega \ $ and hence $E$ may have any cardinality. Lastly, the Weierstrass-sum of a series ---if it exists--- is a real number, while the result of a $\Omega $-sum is a hyperreal number in $\mathbb{R^*}$.

Our generalised notion of sum allows us to obtain an analogue of the familiar principle of $\sigma$-additivity. In particular, it can be shown that  \cite[section 3.4]{NAP}:
\begin{proposition}[infinite sum rule]
If $A = \bigcup_{i\in I}A_i$, with $A_i \cap A_j = \emptyset$ for all $i,j \in I$, then:
$$P(A) =  \underset{\lambda \uparrow \Omega }{\lim } (\Sigma_{i\in I} P(A_i \mid \lambda)).$$
\end{proposition}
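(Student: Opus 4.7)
The plan is to exploit the crucial finiteness feature of the construction: although the index set $I$ may be of arbitrary cardinality, each $\lambda \in \Lambda$ is a finite subset of $\Omega$, so only finitely many of the intersections $A_i \cap \lambda$ can be nonempty. This collapses the apparently infinite sum $\Sigma_{i \in I} P(A_i \mid \lambda)$ to a finite one for every fixed $\lambda$, so the expression inside the $\Omega$-limit on the right-hand side is a genuine real-valued function of $\lambda$, and Definition~\ref{v2} applies to it.

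First I would note that, by the disjointness hypothesis and $A = \bigcup_{i \in I} A_i$, for every $\lambda \in \Lambda$ we have the disjoint decomposition $A \cap \lambda = \bigsqcup_{i \in I}(A_i \cap \lambda)$, which is effectively a decomposition of the \emph{finite} set $A \cap \lambda$ into finitely many nonempty pieces. Using the weight-based definition of $P(\cdot \mid \lambda)$, this gives
\begin{equation*}
P(A \mid \lambda) \;=\; \frac{\Sigma_{a \in A \cap \lambda} w(a)}{\Sigma_{a \in \lambda} w(a)} \;=\; \frac{\Sigma_{i \in I}\Sigma_{a \in A_i \cap \lambda} w(a)}{\Sigma_{a \in \lambda} w(a)} \;=\; \Sigma_{i \in I} P(A_i \mid \lambda),
\end{equation*}
where the rearrangement is justified because at most $|\lambda|$ of the summands in $i$ are nonzero. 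Hence $P(A \mid \lambda)$ and $\Sigma_{i \in I} P(A_i \mid \lambda)$ agree pointwise as functions of $\lambda$.

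Taking $\Omega$-limits on both sides and applying the definition of the NAP function then yields
\begin{equation*}
P(A) \;=\; \lim_{\lambda \uparrow \Omega} P(A \mid \lambda) \;=\; \lim_{\lambda \uparrow \Omega}\bigl(\Sigma_{i \in I} P(A_i \mid \lambda)\bigr),
\end{equation*}
which is the desired identity. There is no real obstacle here, since the $\Omega$-limit respects pointwise equality of real-valued functions on $\Lambda$ by construction of the ultrapower $\mathbb{R^*}$; the only subtlety worth flagging explicitly is that the inner sum over $i \in I$ is \emph{not} a Weierstrass sum but simply the finite sum of the finitely many nonzero terms $P(A_i \mid \lambda)$, which is why the right-hand side makes sense without any convergence assumption on $I$.
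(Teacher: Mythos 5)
Your proof is correct and is essentially the argument the paper relies on (the paper itself defers the proof to section 3.4 of the cited Non-Archimedean Probability reference): the key observation that the finiteness of each $\lambda$ collapses $\Sigma_{i\in I}P(A_i\mid\lambda)$ to a finite sum, combined with finite additivity of the ratio formula on $\lambda$ and the fact that pointwise equality of functions on $\Lambda$ passes to the $\Omega$-limit, is exactly the intended route. Nothing further is needed.
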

This natural infinite sum rule holds not only for countable families of events, but for families of events of any cardinality. This means that $NAP$ functions have what is called the \emph{perfect additivity property}.\footnote{For a discussion of the virtues of perfect additivity, see \cite{Seidenfeld et al 2014}.}


The notion of infinite sum also allows us to express the (non-Archimedean) probability of an event in a new way, namely as:

$$     P(A)     = \frac{\Sigma_{a \in A}   (w(a))}{\Sigma_  {a \in\Omega}   (w(a))    }  .   $$


\section{Relating Popper functions to non-Archimedean probability functions}

The main aim of the present section is to prove the following:

\begin{theorem}\label{main theorem}
\begin{enumerate}
\item  For every finitely additive Popper function on any sample space, there is a regular NAP function that is point-wise infinitesimally close to it.
\item For every regular $NAP$ function on any sample space, there is a finitely additive Popper function that is infinitesimally close to it.
\end{enumerate}
\end{theorem}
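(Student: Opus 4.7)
The plan is to prove the two directions separately. Direction (2) is straightforward; direction (1) is where the real work lies.

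For direction (2), given a regular NAP function $P = P_{\mathcal{U},w}$ on $\Omega$, I would define a candidate Popper function by taking standard parts: $C(A,B) := \mathrm{st}\bigl(P(A \cap B)/P(B)\bigr)$ for $B \neq \emptyset$, with $C(A,\emptyset) := 1$ by convention. Regularity guarantees $P(B) > 0$, so the ratio is a finite hyperreal in $[0,1]_{\mathbb{R^*}}$ and its standard part is a well-defined real in $[0,1]$. The four Popper axioms follow from the fact that $\mathrm{st}$ is a ring homomorphism on the finite hyperreals: axioms 1 and 2 are immediate, axiom 3 follows from multiplicativity, and axiom 4 needs a short calculation. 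If $C(A,B)=C(B,A)=1$ then $P(A \triangle B)$ is infinitesimal relative to both $P(A)$ and $P(B)$, whence $\mathrm{st}\bigl(P(D \cap A)/P(A)\bigr) = \mathrm{st}\bigl(P(D \cap A \cap B)/P(A \cap B)\bigr) = \mathrm{st}\bigl(P(D \cap B)/P(B)\bigr)$. Infinitesimal closeness of $C$ and $P(\cdot\mid\cdot)$ is immediate from the construction.

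For direction (1), the obstacle is that a NAP function is pinned down by a single real-valued weight $w:\Omega \to \mathbb{R}^+$ together with an ultrafilter, whereas a finitely additive Popper function may encode fine-grained conditional behaviour across many \emph{levels}. My approach is to pass through the lexicographic / non-standard representation of Popper functions discussed in \cite{Halpern 2010}, decomposing $C$ into a transfinite sequence of probability measures $(\mu_\alpha)_{\alpha<\kappa}$ with disjoint supports $S_\alpha$, and then realise this decomposition via the NAP recipe. Concretely, I want to choose $w$ and a fine free ultrafilter $\mathcal{U}$ on $\Lambda = \mathcal{P}_{fin}(\Omega)$ so that for every pair $(A,B)$ and every $\epsilon > 0$, the set
\[
S(A,B,\epsilon) := \Bigl\{ \lambda \in \Lambda : \Bigl| \tfrac{\sum_{a \in A \cap B \cap \lambda} w(a)}{\sum_{a \in B \cap \lambda} w(a)} - C(A,B) \Bigr| < \epsilon \Bigr\}
\]
lies in $\mathcal{U}$. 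The construction then reduces to a finite approximation lemma: for any finite list of pairs $(A_i,B_i,\epsilon_i)$ and any finite $\lambda_0 \subseteq \Omega$, produce a $\lambda \supseteq \lambda_0$ witnessing all the inequalities simultaneously, with $w$ calibrated using the $\mu_\alpha$'s. Once established, this lemma yields the finite intersection property, and a standard extension produces the required fine free ultrafilter.

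The main obstacle is reconciling the real-valued nature of $w$ with the (possibly transfinite) lexicographic hierarchy implicit in $C$: different levels of the hierarchy ought to be separated by infinitesimal ratios, yet $w$ takes values in $\mathbb{R}^+$. My expectation is that the ultrafilter itself must create the infinitesimals. Rather than encoding the hierarchy in the absolute magnitudes of $w$, one arranges the weights so that along cofinally many $\lambda$ the partial sums $\sum_{a \in S_\alpha \cap \lambda} w(a)$ grow at vastly different rates across levels, and the inter-level ratios become infinitesimal in the $\Omega$-limit. Tuning these rates to work uniformly for every pair $(A,B)$, and to push through at limit ordinals of the hierarchy, is where the technical subtleties concentrate; this is also where I would expect to lean most heavily on the machinery of \cite{Halpern 2010} for bookkeeping.
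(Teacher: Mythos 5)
Your direction (2) is essentially the paper's own argument (take standard parts of the ratios; regularity makes the ratios well defined) and is fine. Direction (1), however, has a genuine gap: you build the NAP function on the \emph{same} sample space $\Omega$, with the same events $A,B\subseteq\Omega$ appearing in your sets $S(A,B,\epsilon)$. This cannot work in general. If $B$ is a finite event, then for every $\lambda\supseteq B$ (an ultrafilter-large set of $\lambda$'s, by fineness) the ratio $\sum_{a\in A\cap B\cap\lambda}w(a)\,/\sum_{a\in B\cap\lambda}w(a)$ equals the \emph{fixed positive real} $\sum_{a\in A\cap B}w(a)\,/\sum_{a\in B}w(a)$, so $P(A\mid B)$ is a non-infinitesimal real whenever $A\cap B\neq\emptyset$; yet a Popper function may have $C(A,B)=0$ for such a pair --- the paper gives exactly this counterexample with $B=\{0,1\}$ and notes explicitly that the NAP function cannot live on the same sample space. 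The same obstruction undermines the mechanism you propose for generating infinitesimals: for the partial sums $\sum_{a\in S_\alpha\cap\lambda}w(a)$ to ``grow at vastly different rates'' across levels, each support $S_\alpha$ must be infinite, which the original $\Omega$ need not provide. The missing move, which is the heart of the paper's proof, is to replace $\Omega$ by a fresh countable sample space and \emph{interpret} each atom of the event algebra of $C$ as an infinite subset of it; one then restricts to a directed set $\Lambda$ on which $|\bar b_i\cap\lambda|=|\bar b_j\cap\lambda|$ for atoms of equal rank and $|\bar b_i\cap\lambda|>|\bar b_j\cap\lambda|^{2}$ when $rk(b_i)<rk(b_j)$, with the real weights calibrating probabilities only \emph{within} a rank. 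Your instinct that the ultrafilter (rather than the magnitudes of $w$) must create the infinitesimals is exactly right, but it is only available after this re-interpretation.

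A second, smaller worry concerns your route to the infinite case. You want to decompose $C$ into an ordinal-indexed family of measures $(\mu_\alpha)_{\alpha<\kappa}$ and then run a finite-intersection-property argument. For infinitely generated languages the level structure of a Popper function need not be well-founded --- this is precisely the obstruction behind Halpern's examples 4.8 and 4.10 cited in the paper --- so the transfinite decomposition you start from need not exist, and it is not clear that the finite approximation lemma can be stated, let alone proved, uniformly across a non-well-founded hierarchy. The paper avoids this entirely: it proves standard-part agreement for each finitely generated sublanguage (where van Fraassen's ranks are finite), and then glues the resulting NAP spaces by a two-level ultrafilter construction, transferring agreement to the full language via \L os' theorem. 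Some device of this kind, rather than a direct global FIP argument over a single weight function, appears to be needed.
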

This theorem extends an earlier theorem that was proved by McGee in \cite{McGee}.

The second part of the theorem that we seek to prove is straightforward. Indeed, it is routine to verify that given any regular non-Archimedean probability function $P$, if $C(B \mid A)$ is defined as $$st(\frac{P(B\wedge A)}{P(A)})$$ (where $st (a)$ denotes the unique real number that is closest to $a$), then $C$ satisfies the axioms governing Popper functions. 

However, the first part of the theorem that we seek to establish is non-trivial. Given a Popper function on some sample space, we will construct a suitable $NAP$ function, but the $NAP$ function will not have the same underlying sample space.

Indeed, the following argument shows that we cannot in general expect the $NAP$ function to have the same sample space.\footnote{Due to Arthur Pedersen, personal communication.}
Consider the following total, regular non-Archimedean (but not $NAP$!) probability function on $\mathbb{N}$:
$$ P_1(A) \equiv \epsilon \cdot \delta_0 (A) + (1- \epsilon) \cdot \sum_{n \in A \cap \mathbb{N}_0}  \frac{1}{2^n} ,$$ 
where $\epsilon$ is some positive infinitesimal and $ \delta_0$ is the $0-1$-valued function such that for every $A \subseteq \mathbb{N}$, $\delta_0(A) = 1$ if $ 0 \in A$, and $\delta_0(A) = 0$ otherwise. Let $C$ be the Popper function $st(P_1)$.
Since $P_1(\{0\} \mid \{ 0,1\})$ is infinitesimally small, $C(\{ 0 \}, \{0,1\})$ must be $0$. Now for every $NAP$ function $P$, $P(\{ 0 \} \mid \{0,1\})$ must be a real number (different from $0$), since $\{0,1\}$ is a finite set. So there can be no $NAP$ function $P$ such that $\left \vert C(\{ 0 \}, \{0,1\}) - P(\{ 0 \} \mid \{0,1\}) \right \vert$ is infinitesimally small.


\subsection{McGee's theorem}\label{McG}

McGee effectively proves the following theorem \cite[p.~181--184]{McGee}:

\begin{theorem}\label{McGee}
For every Popper function for a propositional language $\mathcal{L}$, there is a regular non-Archimedean probability function on $\mathcal{L}$ that is point-wise infinitesimally close to it.
\end{theorem}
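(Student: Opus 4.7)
The plan is to realize the Popper function $C$ via a well-ordered hierarchy of \emph{classical} finitely additive probability functions, and then collapse that hierarchy into a single non-Archimedean function by assigning infinitesimal weights of strictly decreasing archimedean order to the successive levels. So the strategy is lexicographic in spirit; it is essentially what McGee does, and it mirrors the ``second representation theorem'' flagged in the introduction.

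The first step is to extract from $C$ a transfinite sequence $\langle \mu_\alpha : \alpha < \kappa \rangle$ of finitely additive probability functions on $\La$, together with a sequence $\langle t_\alpha : \alpha < \kappa \rangle$ of propositions, such that $\mu_\alpha$ is concentrated on $t_\alpha$, the $t_\alpha$ are nested and exhaust the non-contradictory part of $\La$, and for each $b$ with $C(\neg b, b)\neq 1$ one has $C(a,b) = \mu_\alpha(a\wedge b)/\mu_\alpha(b)$, where $\alpha$ is the least index with $\mu_\alpha(b) > 0$. The construction is the standard coarse-to-fine stratification: set $\mu_0(\cdot) = C(\cdot, \top)$, let $t_1$ be the supremum of the $C$-null propositions, use Popper axiom (3) to show that conditioning by $t_1$ yields a genuine classical probability $\mu_1$, and iterate transfinitely; since $\La$ is set-sized the process terminates at some ordinal $\kappa$.

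Second, I would embed $\re$ into a non-Archimedean ordered field $\re^*$ that contains, for each $\alpha < \kappa$, a positive element $\epsilon_\alpha$ such that $\epsilon_\beta / \epsilon_\alpha$ is infinitesimal whenever $\alpha < \beta$, with $\epsilon_0 = 1$. A Hahn-series field with value group $\kappa$ (lexicographically ordered) works; a suitable ultrapower also works. In $\re^*$ define
$$ P(a) \;=\; \sum_{\alpha < \kappa} \mu_\alpha(a)\,\epsilon_\alpha, $$
read as a Hahn series in the non-Archimedean field. Verification then splits into three pieces. Finite additivity of $P$ follows term-wise from finite additivity of each $\mu_\alpha$. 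Regularity: every non-contradictory proposition $a$ meets some $t_\alpha$ with $\mu_\alpha(a)>0$, so $P(a)$ has a strictly positive leading coefficient and is therefore strictly positive. Infinitesimal closeness to $C$: for any $a,b$ with $C(\neg b, b)\neq 1$, letting $\alpha$ be least with $\mu_\alpha(b)>0$, both $P(a\wedge b)$ and $P(b)$ have their leading terms at order $\epsilon_\alpha$, and the ratio $P(a\wedge b)/P(b)$ therefore has standard part $\mu_\alpha(a\wedge b)/\mu_\alpha(b) = C(a,b)$; the degenerate case $C(\neg b, b)=1$ forces $C(\cdot,b)$ constant and is handled separately.

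The main obstacle is the first step, i.e.\ carrying out the transfinite stratification and verifying, at each successor and limit stage, that the Popper axioms genuinely force $\mu_\alpha$ to be a finitely additive probability measure and that the compatibility conditions needed to recover $C$ are preserved. In particular, Popper axioms (3) and (4) must do the work of showing that conditioning on $t_\alpha$ is well-defined and independent of representative, and at limit stages one needs to check that the collection of propositions already assigned nonzero mass at some earlier level is closed under finite Boolean operations. Once the stratification is in place, the passage to the non-Archimedean field and the verification that $P$ works are comparatively routine bookkeeping on orders of infinitesimality.
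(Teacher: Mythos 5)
There is a genuine gap, and it sits exactly where you locate "the main obstacle": the transfinite stratification in your first step does not exist in general for an infinitely generated language, so the construction cannot get off the ground. Two things go wrong. First, $t_1$, "the supremum of the $C$-null propositions," need not exist: $\mathcal{L}$ is closed only under \emph{finite} Boolean operations, and the set of null propositions is typically infinite. Second, and more fundamentally, even passing to a completion would not help, because the ordering of "degrees of nullity" induced by a Popper function need not be well-founded. Concretely: take atoms $q, p_1, p_2, \dots$ and a nonstandard measure with $P(q) \approx 1$ and $P(p_n) \propto \epsilon^{1+1/n}$ in an ultrapower, so that each $p_{n+1}$ is infinitely more probable than $p_n$ while all $p_n$ are null relative to $\top$; the induced Popper function $C = st(P(\cdot \mid \cdot))$ satisfies $C(p_n, p_n \vee p_{n+1}) = 0$ for every $n$, giving a strictly descending chain of ranks among the null events with no minimum. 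Hence there is no "first layer of null events" on which to concentrate $\mu_1$, and no well-ordered family $\langle \mu_\alpha \rangle_{\alpha<\kappa}$ recovering $C$ can exist. This is precisely the non-well-foundedness phenomenon the paper flags in section 4 (citing Halpern's examples 4.8 and 4.10, which refute the conjecture that such conditional probabilities always admit a well-ordered lexicographic representation). Your steps two and three are fine \emph{conditional} on step one, but step one is false as a general claim, not merely laborious.

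For a finitely generated language your stratification does exist --- there are finitely many normal atoms, hence finitely many van Fraassen ranks, and your $P(a)=\sum_\alpha \mu_\alpha(a)\,\epsilon_\alpha$ is exactly McGee's polynomial $r_0\epsilon^0+\dots+r_n\epsilon^n$. What your proposal is missing is McGee's second stage: one carries out the stratification only on each finitely generated sublanguage $\mathcal{L}_i$ (where it is legitimate), obtaining functions $Pr_i$ infinitesimally close to $C\restriction\mathcal{L}_i$, and then glues these together by taking an ultraproduct over an ultrafilter on the index set of the $\mathcal{L}_i$'s. The ultraproduct is not optional bookkeeping here; it is the device that tolerates the non-well-founded interleaving of ranks that appears in the limit, which no single global well-ordered hierarchy can capture.
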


The structure of McGee's proof is roughly as follows.

Let $\mathcal{L}$, and a Popper function $C: \mathcal{L} \times \mathcal{L} \mapsto \mathbb{R}$, be given. 

\vspace{0.3cm}

\noindent \textbf{Stage 1}

\vspace{0.1cm}

Let $\mathcal{L}_1,\ldots, \mathcal{L}_i, \ldots$ be an enumeration of the \emph{finitely generated} sub-languages of $\mathcal{L}$. By ``finitely generated language", McGee means a language that is generated by the familiar boolean operations from a finite set of basic events. 
For each $\mathcal{L}_i$, McGee generates a non-Archimedean probability function $Pr_i$ defined on $\mathcal{L}_i$ which is infinitely close to $C$ restricted to $\mathcal{L}_i$.
In other words, he generates a probability function for any finitely generated event-algebra on the sample space $\Omega$. 

Van Fraassen has shown how on the basis of the given Popper function $C$, for every $\mathcal{L}_i$ a finite number $n$ of \emph{ranks} or degrees of infinitesimality are distinguished \cite{VanFraassen}. Every rank $k$ is marked by a `maximally probable' sentence $a_k \in \mathcal{L}_i$ of that rank, so that:
\begin{itemize}
\item $C(b,a_k) >0$ for every sentence $b$ of rank $k$;
\item $C(b,a_k) = 0 $ for every sentence $b$ of rank $>k$;
\item $C(a_k,b) = 0$ for every sentence $b$ of rank $<k$.
\end{itemize}

McGee then shows how in terms of these ranks an infinitesimal probability function $Pr_i$ can be expressed that is infinitely close to $C$.

What McGee's construction shows is effectively that the probability of a sentences $b\in \mathcal{L}_i$ can be represented as: $$  Pr_i(b) = r_0 \cdot  \epsilon^0+\ldots+ r_n \cdot \epsilon^n,   $$ where $\epsilon$ is an infinitesimal number and $r_0,\ldots ,r_n \in [ 0,1 ]$ \cite[section 4.1]{Halpern 2010}. ($r_0$ is then $st[Pr_i(b)]$.)

In other words, $Pr_i(b)$ can be seen as a finite sequence $\langle r_0,\ldots,r_n  \rangle$ of classical probability values. These finite sequences are lexicographically ordered. So the non-archimedean probabilities for $\mathcal{L}_i$ can be given a \emph{lexicographical} representation.

\vspace{0.3cm}

\noindent \textbf{Stage 2}

\vspace{0.1cm}

In the second part of the proof, an ultrafilter on the index set of the $\mathcal{L}_i$'s yields ultraproduct model $Pr$ for the whole of $\mathcal{L}$. By the fact that for each $i,j$, $$Pr_i(b \mid a) \approx C(b,a) \approx Pr_j(b \mid a)  ,$$ we have that $Pr(b \mid a) \approx C(b \mid a)$, and we are done.

\vspace{0.4cm}

Note, incidentally, that the function $Pr$ is not given as a triple $\langle \Omega, \mathcal{U}, w   \rangle$, so the non-Archimedean function that McGee produces is not an $NAP$ function. Also, the function $Pr$ is only finitely additive, and not countably additive.



McGee then reflects on the familiar notion of $\sigma$-additivity \cite[p.~184]{McGee}:
\begin{quote}
For some purposes, it is useful to look at sentential calculi in which we can form infinite conjunctions and disjunctions and probability measures that are countably additive, rather than merely finitely additive. Thus, we may add to the definition of a Popper function this following requirement:
\begin{multline*} \textrm{If } b \textrm{ is the disjunction of } a_0,a_1,a_2,\ldots \textrm{ and if }  a_i \wedge a_j \\  \textrm{ is inconsistent whenever }  i\neq j, 
 \textrm{ then } C(b,c)=\Sigma_{i=0}^{\infty}C(a_i,c) \end{multline*}
For the corresponding condition in terms of nonstandard probability assignments, the first thing that comes to mind is nonsensical:
\begin{multline*}  \textrm{If } b \textrm{ is the disjunction of } a_0,a_1,a_2,\ldots \textrm{ and if } a_i \wedge a_j   \\ \textrm{ is inconsistent whenever } i\neq j, \textrm{ then } P(b)=\Sigma_{i=0}^{\infty}P(a_i)   \end{multline*}
The nonstandard model of analysis will not be topologically complete, and thus this infinite sum will not normally exist. Instead, the condition we require is:
\begin{multline*}  \textrm{If } b \textrm{ is the disjunction of } a_0,a_1,a_2,\ldots \textrm{ and if } a_i \wedge a_j \\  \textrm{ is inconsistent whenever } i\neq j, \textrm{ then } P(b) \approx \Sigma_{i=0}^{\infty}P(a_i)  \end{multline*}
It follows immediately from [theorem \ref{McGee}] that with these two additional requirements, the two approaches will again coincide.
\end{quote}

Of course McGee is right that in a non-Archimedean context, $\sigma$-additivity cannot be the right infinite additivity rule.\footnote{The reasons for this are explained in \cite{W&H}.} But this not a valid reason for retreating to insisting only that the infinite sum (in the Weierstrass sense) of the probabilities and the probability of the infinite union should agree up to an infinitesimal. Indeed, we expect from our theory of probability that the probability of an infinite sum is \emph{exactly} computable on the basis of the probabilities of its components: the probability of an infinite union of a family of events should be expressible as an infinite sum of the probabilities of the events in the family. We have seen in the previous section how, using a generalised limit concept, non-Archimedean probability functions are able to do this.

We have noted that $NAP$ does have a natural infinite additivity rule. Indeed, $NAP$ satisfies \emph{perfect additivity}. So our representation theorem \ref{main theorem} is a substantial strengthening of McGee's theorem \ref{McGee}. We now turn to the proof of the main theorem.


\subsection{Finitely generated languages}

In this section we start, as McGee does \cite[p.~181--183]{McGee}, by proving the result for finitely generated languages, and then in the next section we will use this to construct an $NAP$ space for an arbitrary language (giving part 1 of Theorem \ref{main theorem}, and so completing our main theorem). The construction of the $NAP$ function in this section is fairly hands on, and mathematically not very difficult.

First we look at the language which describes the events. Let $\mathcal L_f$ be a finitely generated propositional language. So $\mathcal L_f$ has a finite number of atomic propositions, and is closed under the Boolean operations of conjunction, disjunction and negation.
As $\mathcal L_f$ is finitely generated we can choose $b_1, b_2, \dots ,b_m$ from $\La_f$ such that:
\begin{enumerate}
\item $b_i\wedge b_j \leftrightarrow \bot$ for all $i,j\leq m$ with $i\neq j$ 
\item $\top \leftrightarrow b_1\vee b_2\vee \dots \vee b_m$
\item For any sentence $a$ of $\mathcal L_f$ which is not a contradiction, there exists $\{i_1,\dots i_k\} \subseteq \{1,2,\dots,m\}$ such that $a \leftrightarrow b_{i_1} \vee \dots \vee b_{i_k}$
\end{enumerate}
So the $b_i$'s are the most fine-grained description of the ``state of the world'' that our language can give. Let us call these the \emph{normal atoms} of the language (noting they are not necessarily atomic propositions). We call a sentence \emph{normal} if it is the disjunction of $b_i$'s; it is clear from the above argument that each sentence is equivalent to a normal sentence.

Now suppose we have a Popper function $C:\mathcal L_f \times \La_f \rightarrow \re$. For any sentences $a,b$ of $\La_f$, $C(a,b)$ is to be interpreted as the probability of $a$ conditional on $b$. We will assume that the Popper function is \emph{regular} in the sense that the only $x$'s such that $C(.,x)$ is the constant function 1 are contradictions (and any atomic proposition is not a contradiction). However, this is not an important constraint as all such sentences that are not contradictions can still be interpreted as the empty event, which will receive probability zero.

We now wish to construct a $NAP$ space with probability function $P$ which will agree with the Popper function $C$ up to an infinitesimal difference. To define an $NAP$ space we have seen that (section 2) it is sufficient to assign a sample space $\Om$, a weight function $w:\Om\rightarrow\re^+$ and a directed set $\Lmb$ of finite subsets of $\Om$.  In order to compare a Popper function defined on a propositional language (where events are just propositions) to an $NAP$ function defined on a sample space (where events are subsets of that sample space) we must also specify the interpretation of the propositions within the sample space. Then for any sentences $a,b$ of $\La_f$ if $\bar{a},\bar{b}\subseteq\Om$ are their interpretations we  want $P$ to be such that $st(P(\bar{a}|\bar{b}))=C(a,b)$.  Contradictions will be interpreted as the empty set, and tautologies as the full sample space, but for the details we first define the sample space $\Om$. 

We follow McGee's construction at first (\cite[p.~182]{McGee}), although in slightly different terms. Define $a_0$ to be the normal equivalent of a tautology. Set $a_{k+1}$ to be the disjunction of all $b_i$'s such that $b_i$ logically entails $a_k$ and $C(b_i,a_k)=0$. Set $rk(C)$ to be the largest $n$ such that $a_n$ is defined (i.e. where there are normal atoms $b_i$ with $C(b_i,a_{n-1})=0$). To see that $rk(C)$ is well defined, note that as there are only finitely many normal atoms, and Popper functions are finitely additive, each $a_k$ must be the disjunction of strictly fewer normal atoms than the previous one. We call $rk(C)$ the \emph{rank} of the Popper function \cite{VanFraassen} and also set, for each $b_i$, $rk(b_i)$ to be the least $k$ such that $C(b_i,a_k)>0$.


We now depart from McGee to carry out the construction of the $NAP$ space. Importantly, as mentioned  above, we need to give a sample space which is not simply the normal atoms of the language. Instead we will interpret each normal atom by an infinite subset of the sample space. We also specify the weight function and a directed set, and this triple will then yield the $NAP$ function. After this, we need to check that the probabilities assigned agree with the Popper function up to infinitesimals. The construction of the $NAP$ function is fairly straightforward, and it is intuitive to see why it will work, though the proof is not quick. \\

\textbf{Sample space:}
We set $\Om=\aleph_{0}$\\

\textbf{Interpretation:}
 For the interpretation of $\La_f$ it is enough to assign each $b_i$ to a subset of $\Om$, as then the interpretation of any proposition $a$ of $\La_f$ will just be the union of the interpretations of each $b_i$ that compose the normal equivalent of $a$. To do this we assign each $b_i$ to a set $\bar{b}_i$ such that $|\bar{b}_i|=\aleph_0$, each $\bar{b}_i$ is disjoint from the other $\bar{b}_i$'s, and $\bigcup \{\bar{b}_i: i\in\{1,2,\dots m\}\}=\Om$ (so all the points in the sample space are assigned to some $b_i$).\\

\textbf{Weight function:}
We define $w:\Om\rightarrow\mathbb{R}^+$, by
 $$w(x)=C(b_i,a_k)\text{ where }x\in\bar b_i \text{ and }k=rk(b_i)$$\\
Note that this is well defined as for $x\in\Om$ there is a unique $b_i$ such that $x\in\bar{b}_i$, and by definition of $rk(b_i)$ we have $w(x)>0$.\\

\textbf{Directed set:} To show agreement with the Popper function, we will use a directed set $\Lmb$ such that all $ \lmb\in\Lmb$ satisfy the following two properties:
\begin{eqnarray}
\
 \forall i,j\in\{1,\dots m\} \, \, rk(b_i)=rk(b_j) \rightarrow |\bar{b}_i\cap\lmb|=|\bar{b}_j\cap\lmb| \label{lambda} \label{lambda}\\
 \forall i,j\in\{1,\dots m\}\, \, rk(b_i)<rk(b_j) \rightarrow |\bar{b}_i\cap\lmb|>|\bar{b}_j\cap\lmb|^{2} \label{lambda2}
\end{eqnarray}

\noindent Property \ref{lambda} will ensure that conditional probabilities using propositions that are all of the same rank agree with the Popper function (in fact they will be the same, not just infinitesimally close), and property \ref{lambda2} will ensure that propositions with a lower rank will dominate, so those of a higher rank only make an infinitesimal difference.
\begin{lemma}\label{directedset}We can find $\Lmb\subset\mathcal{P}_{fin}(\Om)$ such that properties (\ref{lambda}) and (\ref{lambda2}) hold for every $\lmb\in\Lmb$.
\end{lemma}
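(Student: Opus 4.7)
The plan is to build $\Lambda$ explicitly, parametrizing its elements by a tuple of ``sample sizes,'' one per rank. Let $K = \mathit{rk}(C)$. Since every $\bar b_i$ is countably infinite, fix once and for all an enumeration $\bar b_i = \{x_{i,0}, x_{i,1}, x_{i,2},\dots\}$ for each $i \leq m$. The atoms of the same rank will be ``sampled'' to exactly the same depth, and the depths will drop super-quadratically as the rank rises.

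Concretely, call a tuple $\vec n = (n_0, n_1,\dots,n_K) \in \nat^{K+1}$ \emph{admissible} if $n_k > n_{k+1}^{2}$ for all $k < K$. For each admissible $\vec n$ set
\[
\lambda_{\vec n} \;=\; \bigcup_{i=1}^{m}\bigl\{x_{i,0},x_{i,1},\dots,x_{i,\,n_{rk(b_i)}-1}\bigr\},
\]
and put $\Lmb = \{\lambda_{\vec n} : \vec n \text{ admissible}\}$. By construction $|\bar b_i \cap \lambda_{\vec n}| = n_{rk(b_i)}$, so properties~(\ref{lambda}) and~(\ref{lambda2}) follow immediately from admissibility: two atoms of equal rank get the same count, and if $rk(b_i)=k<k'=rk(b_j)$ then $|\bar b_i \cap \lambda_{\vec n}| = n_k > n_{k+1}^{2} \geq n_{k'}^{2} = |\bar b_j \cap \lambda_{\vec n}|^{2}$ (using admissibility iteratively and monotonicity).

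Although the lemma statement only asks for the two properties, for the triple $\langle \Om, \mathcal U, w\rangle$ to be a NAP space we also need $\Lmb$ to be directed under inclusion with $\bigcup \Lmb = \Om$, so I would verify both at once. For directedness, given admissible $\vec n$ and $\vec m$, define $\vec p$ by downward recursion: $p_K = \max(n_K, m_K)$, and $p_k = \max(n_k, m_k, p_{k+1}^{2}+1)$ for $k<K$. Then $\vec p$ is admissible, $p_k \geq n_k$ and $p_k \geq m_k$ for all $k$, so $\lambda_{\vec p} \supseteq \lambda_{\vec n} \cup \lambda_{\vec m}$. For the cover condition, given any $x_{i,n} \in \Om$ with $k=rk(b_i)$, I would produce an admissible $\vec m$ with $m_k > n$ by setting $m_K = 1$, recursively $m_{j-1} = m_j^2 + 1$ for $j > k$, then $m_k = \max(n+1, m_{k+1}^{2}+1)$, and continuing the recursion $m_{j-1} = m_j^{2}+1$ down to $j=1$; then $x_{i,n} \in \lambda_{\vec m}$.

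The construction involves no real obstacle: the only mild subtlety is that admissibility is a constraint between consecutive coordinates, so one has to be careful to verify it by recursion rather than coordinatewise. In particular, when producing a common refinement, it is essential to repair admissibility by forcing $p_k > p_{k+1}^{2}$ after taking coordinatewise maxima, which is why the definition uses $p_{k+1}^{2}+1$ rather than simply $\max(n_k,m_k)$.
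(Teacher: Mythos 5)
Your proof is correct, but it takes a genuinely different route from the paper's. The paper defines $\Lmb$ to be the set of \emph{all} $\lmb\in\mathcal{P}_{fin}(\Om)$ satisfying (\ref{lambda}) and (\ref{lambda2}), so the two stated properties hold by fiat, and the only real content --- that this $\Lmb$ is directed and covers $\Om$ --- is asserted in a single line as straightforward. You instead build an explicit cofinal subfamily, parametrized by admissible tuples of per-rank sample depths along fixed enumerations of the $\bar b_i$, and you verify property (\ref{lambda2}) (via the decreasing chain $n_k > n_{k+1}^2 \geq n_{k+1} > \cdots$, which does give $n_k > n_{k'}^2$ for all $k'>k$), directedness (via the downward recursion $p_k=\max(n_k,m_k,p_{k+1}^2+1)$, which correctly repairs admissibility after taking maxima), and covering. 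Your verification is essentially the padding argument the paper leaves implicit, so the extra detail is a gain rather than a detour; what the paper's choice buys is that its $\Lmb$ is maximal, so no cofinality question can arise later, whereas your $\Lmb$ is a proper subfamily --- though it still supplies everything the subsequent theorem needs (directedness, covering, the two properties, and $\lmb$'s with arbitrarily large intersections with the high-rank atoms). The only blemish is the edge case $k=\mathit{rk}(C)$ in your covering argument, where $m_{k+1}$ does not exist and one should simply set $m_K=n+1$ and recurse downward from there.
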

\begin{proof}
This is easy as we can just take $$\Lmb=\{\lmb\in \mathcal{P}_{fin}(\Om): \lmb \text{ satisfies (\ref{lambda}) and (\ref{lambda2}})\}$$
Then $\Lmb$ is directed: any $x\in \Om$ can be incorporated into such a set so $\bigcup\Lmb=\Om$, and for any $\lmb_1,\lmb_2\in\Lmb$ it is straightforward to expand $\lmb_1\cup\lmb_2$ to satisfy (\ref{lambda}) and (\ref{lambda2}).
\end{proof}
\bigskip

Now we have defined an $NAP$ space (up to the choice of ultrafilter $\U_\Lmb$, but any choice will have the properties we need) and so we have specified an $NAP$ function $P:\mathcal{P}(\Om)\rightarrow \re ^*$. So now we need to show that $P$, the $NAP$ function, agrees with $C$, the Popper function, up to the standard part. This takes rather more work than one might expect, becoming fairly technical but the intuition is in \ref{lambda} and \ref{lambda2} above.

We start with a simple case where we are just dealing with normal atoms of the same rank.\footnote{Henceforth we just use, e.g. $b$ for $\bar{b}$, which will be unambiguous.} 
\begin{lemma} \label{st} For any normal atom $b_i$ and $b=b_i\vee b_{j_1}\vee\dots\vee b_{j_k}$ with $rk(b_i)=rk(b_{j_1})=\dots=rk(b_{j_k}) $ we have 
\begin{equation} P(b_i|b)=C(b_i,b)
\end{equation}
\end{lemma}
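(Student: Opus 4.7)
The plan is to show that, because all the disjuncts of $b$ share the same rank $k$, the conditional probability $P(b_i \mid b, \lambda)$ is actually \emph{constant} in $\lambda$ on the directed set $\Lambda$, and this constant equals $C(b_i,b)$ exactly. So no infinitesimal slack is needed here.

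First I would unpack the definition of the NAP conditional probability. By construction $w$ is constant on each $\bar{b}_{j_\ell}$, equal to $C(b_{j_\ell}, a_k)$, and likewise constant on $\bar{b}_i$ with value $C(b_i, a_k)$. Property (\ref{lambda}) of $\Lambda$ says that for any $\lambda \in \Lambda$ the intersections $\bar b_i \cap \lambda$ and $\bar b_{j_\ell} \cap \lambda$ all have a common cardinality $N_\lambda$, since all these atoms share rank $k$. Therefore
\begin{equation*}
P(b_i \mid b,\lambda)=\frac{\sum_{x\in \bar b_i\cap\lambda}w(x)}{\sum_{x\in\bar b\cap\lambda}w(x)}
=\frac{N_\lambda\, C(b_i,a_k)}{N_\lambda\bigl(C(b_i,a_k)+\sum_\ell C(b_{j_\ell},a_k)\bigr)},
\end{equation*}
and the $N_\lambda$'s cancel, leaving a value independent of $\lambda$. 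Taking the $\Omega$-limit therefore simply returns this real number.

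Next I would evaluate $C(b_i,b)$ using the Popper axioms. Since each $b_{j_\ell}$ has rank $k$, each of them logically entails $a_k$, hence so does $b$, which means $b\wedge a_k \leftrightarrow b$. Using axiom 3 with the decomposition $b_i = b_i \wedge b$,
\begin{equation*}
C(b_i,a_k)=C(b_i\wedge b, a_k)=C(b,a_k)\cdot C(b_i,b\wedge a_k)=C(b,a_k)\cdot C(b_i,b).
\end{equation*}
Because $a_k$ is not a contradiction (we are in the regular case and rank-$k$ atoms exist witnessing $C(\cdot,a_k)\neq\mathbf 1$), axiom 2 guarantees $C(\cdot,a_k)$ is a finitely additive probability function, so
\begin{equation*}
C(b,a_k)=C(b_i,a_k)+\sum_\ell C(b_{j_\ell},a_k),
\end{equation*}
which is strictly positive since $C(b_i,a_k)>0$ (as $\mathrm{rk}(b_i)=k$). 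Dividing gives
\begin{equation*}
C(b_i,b)=\frac{C(b_i,a_k)}{C(b_i,a_k)+\sum_\ell C(b_{j_\ell},a_k)},
\end{equation*}
which is exactly the constant value computed above. Hence $P(b_i \mid b)=C(b_i,b)$.

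The only mild subtlety is to confirm that $C(\cdot,a_k)$ is a genuine probability function (needed for finite additivity on the denominator) and that the denominator is nonzero; both follow from regularity plus the definition of rank. The key structural point is that property (\ref{lambda}) was tailored precisely so that weights, and not cardinalities, determine the conditional ratio at each $\lambda$, making the within-same-rank case exact rather than merely infinitesimally close — this is what makes this base case simple and sets up the later, harder case where mixed ranks force genuine use of property (\ref{lambda2}) and infinitesimal comparison.
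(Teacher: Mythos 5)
Your proof is correct and follows essentially the same route as the paper's: unpack $P(b_i\mid b,\lambda)$ via the weight function, cancel the common cardinality supplied by property (\ref{lambda}), and then use Popper axioms 2 and 3 to identify the resulting constant with $C(b_i,b)$. You simply spell out the Popper-axiom side and the non-degeneracy of $C(\cdot,a_k)$ in more detail than the paper does.
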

So in this case the value given by the $NAP$ function is actually the same as that given by the Popper function, not just infinitesimally close.

\begin{proof}
We have for all $\lmb\in\Lmb$ 
\begin{eqnarray*}
\frac{P(b_i\cap\lmb)}{P(b\cap\lmb)} =\frac{\sum_{x\in{b}_i\cap\lmb} w(x)}{\sum_{x\in {b}\cap\lmb}w(x)}  & \mbox{by definition of $P$} \\ 
=  \frac{|b_i\cap\lmb| C(b_i,a_{rk(b_i)})}{\sum_{b_j\in b} |{b}_j\cap\lmb| C(b_j,a_{rk(b_i)})} & \mbox{by definition of $w$} \\ 
=  \frac{C(b_i,a_{rk(b_i)})}{\sum_{b_j\in b} C(b_j,a_{rk(b_i)})} &\mbox{by (\ref{lambda})} \\
=  \frac{C(b_i,a_{rk(b_i)})}{C(b,a_{rk(b_i)})}=C(b_i,b) &\mbox{by (2) and (3) of Popper functions.}
\end{eqnarray*}
This property is then preserved in the $\Omega$-limit.
\end{proof}

\bigskip
We are now ready to show we have agreement up to an infinitesimal in the general case:
\begin{theorem}
For any $a,b\in\La$ we have
\begin{eqnarray}
C(a,b)=st(\frac{P(\bar{a} \cap \bar{b})}{P(\bar{b})})\label{standard agreement}
\end{eqnarray}
\end{theorem}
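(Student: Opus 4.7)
The plan is to reduce the general case to the equal-rank case established by Lemma \ref{st}. Without loss of generality I take $a$ and $b$ to be normal sentences with $b$ not a contradiction. Let $j_0$ be the minimum rank of any normal atom appearing in $b$, and split $b = b' \vee b''$ where $b'$ is the disjunction of rank-$j_0$ atoms in $b$ and $b''$ the disjunction of those of strictly higher rank (possibly empty). I will prove both
$$C(a,b) = C(a \wedge b', b') \quad \text{and} \quad \frac{P(\bar a \cap \bar b)}{P(\bar b)} \approx \frac{P(\overline{a \wedge b'})}{P(\bar{b'})}.$$
Since $a \wedge b'$ and $b'$ are both disjunctions of rank-$j_0$ atoms, a disjunction-version of Lemma \ref{st}---immediate from finite additivity of $P(\cdot \mid \bar{b'})$ and Popper axiom (2)---then gives $P(\overline{a \wedge b'} \mid \bar{b'}) = C(a \wedge b', b')$ \emph{exactly}, closing the argument.

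For the Popper-side reduction, note that $b$ entails $a_{j_0}$ (all its atoms have rank $\geq j_0$), so $b \wedge a_{j_0} \equiv b$. By the definition of rank and finite additivity of $C(\cdot, a_{j_0})$ we have $C(b'', a_{j_0}) = 0$ while $C(b, a_{j_0}) = C(b', a_{j_0}) > 0$. Popper axiom (3) with $d = a_{j_0}$ applied to $b \wedge b'' \equiv b''$ gives $C(b'', a_{j_0}) = C(b, a_{j_0}) \cdot C(b'', b)$, forcing $C(b'', b) = 0$; monotonicity then gives $C(a \wedge b'', b) = 0$, so $C(a, b) = C(a \wedge b', b)$. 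A second application of axiom (3) relates $C(a \wedge b', b)$ to $C(a \wedge b', b')$ via conditioning on $a_{j_0}$, and using $C(b, a_{j_0}) = C(b', a_{j_0})$ yields the desired equality.

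For the NAP-side reduction it suffices to show $P(\bar{b''})/P(\bar{b'})$ is infinitesimal (the analogous estimate for $P(\overline{a \wedge b''})/P(\bar{b'})$ is identical). By property (\ref{lambda}), all rank-$j_0$ atoms $b_i$ in $b'$ give a common value $N_\lambda := |\bar b_i \cap \lambda|$; by property (\ref{lambda2}), every atom $b_j$ of rank $> j_0$ in $b''$ satisfies $|\bar b_j \cap \lambda| < N_\lambda^{1/2}$. Since $b'$ and $b''$ involve only finitely many atoms with positive bounded weights, $\sum_{x \in \bar{b''} \cap \lambda} w(x)$ is of order $N_\lambda^{1/2}$ while $\sum_{x \in \bar{b'} \cap \lambda} w(x)$ is of order $N_\lambda$, making the pointwise ratio of order $N_\lambda^{-1/2}$. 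The main obstacle is verifying that $N_\lambda$ becomes genuinely infinite in $\mathbb{R}^*$: this follows because $\Omega$ is infinite and $\bigcup \Lambda = \Omega$, so $N_\lambda$ grows unboundedly along a cofinal subfamily of $\Lambda$, and the fineness/freeness of $\mathcal U_\Lambda$ pushes this behaviour into the $\Omega$-limit. Taking standard parts of the resulting ratios and a short algebraic manipulation on $\frac{P(\bar a \cap \bar b)}{P(\bar b)} = \frac{P(\overline{a \wedge b'}) + P(\overline{a \wedge b''})}{P(\bar{b'}) + P(\bar{b''})}$ then completes the reduction.
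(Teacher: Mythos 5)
Your proposal is correct and follows essentially the same route as the paper's own proof: split $b$ at the minimal rank $d=j_0$, use property (\ref{lambda2}) together with fineness of the ultrafilter to show the higher-rank part $b_{>d}$ contributes only infinitesimally to $P(\cdot\mid\bar b)$, reduce to the equal-rank Lemma \ref{st}, and use Popper axiom (3) to identify $C(a\wedge b_d,b_d)$ with $C(a,b)$. If anything, your Popper-side derivation of $C(b'',b)=0$ (hence $C(b_d,b)=1$) via axiom (3) conditioning on $a_{j_0}$ is spelled out more explicitly than in the paper, which simply asserts $C(b_d,b)=1$.
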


\begin{proof}
Fix $a,b\in\La$. We have $$P(a|b)=\frac{P(a\cap b)}{P(b)}=\frac{\sum_{b_i\in a\cap b}P(b_i)}{P(b)}= \sum_{b_i\in a\cap b} \frac{P(b_i)}{P(b)} $$ as $P$ is finitely additive.\\

Let $d$ be the minimum rank of all the normal atoms in $b$. We will see that only the normal atoms of this rank are relevant for the standard part. Set $b_d$ to be the union of all the normal atoms $b_j$ of rank d and $b_{>d}$ to be the union of all those with rank greater than $d$, so $b=b_d\cup b_{>d}$. Now we have for $b_i$ in $b$:  
$$P(b_i|b) = \frac{P(b_i)}{P(b_d)}\frac{P(b_d)}{P(b)}$$
Now for any $\lmb\in\Lmb$, taking $k$ to be the number of $b_i$'s in $b_{>d}$,  by property (\ref{lambda2}) of $\Lmb$ we have$$k^2 |{b}_d\cap\lmb|>|{b}_{>d}\cap\lmb|^{2} \Rightarrow k^2 |{b}\cap\lmb|>|{b}_{>d}\cap\lmb|^{2}$$

Now by property (\ref{lambda}) and the definition of the weight function we have: 
$$ \frac{P({b}_{>d}\cap\lmb)}{P({b}\cap\lmb)}=\frac{|b_{>d}\cap\lmb|\sum_{{b}_i\in b_{>d}} C(b_i,a_{rk(b_i)})}{|b_{d}\cap\lmb|\sum_{b_i\in {b}}C(b_i,a_{rk(b_i)})}$$
but $$M=\frac{\sum_{{b}_i\in b_{>d}} C(b_i,a_{rk(b_i)})}{\sum_{b_i\in {b}}C(b_i,a_{rk(b_i)})}$$ is constant for any $\lmb$ so 
$$\frac{P({b}_{>d}\cap\lmb)}{P({b}\cap\lmb)}=M\frac{|b_{>d}\cap\lmb|}{|b_d\cap\lmb|}<Mk^2\frac{|b_{>d}\cap\lmb|}{|b_{>d}\cap\lmb|^2}=Mk^2\frac{1}{|b_{>d}\cap\lmb|}$$
Thus we have for any  $n\in\nat$ we can choose $\mu\in\Lmb$ such that: $$ \,\,\, \forall \lmb\in \Lambda \text{ with } \lmb\supseteq\mu \,\,\, \frac{P({b}_{>d}\cap\lmb)}{P({b}\cap\lmb)}<1/n$$ 

 By preservation to the $\Lmb$ limit (fineness) this implies $st(\frac{P(b_{>d})}{P(b)})=0$ and so $st(\frac{P(b_{d})}{P(b)})=1$ by standard laws of probability. 
 
Then $$st(P(b_i|b))=st \frac{P(b_i)}{P(b_d)}\frac{P(b_d)}{P(b)}=st \frac{P(b_i)}{P(b_d)}st\frac{P(b_d)}{P(b)}=st(P(b_i|b_d))$$

Going back to the general case, 
$$P(a|b)=\sum_{b_i\in a\cap b} \frac{P(b_i)}{P(b)}=\sum_{b_i\in a\cap b_d} \frac{P(b_i)}{P(b)}  \,\,\,+ \sum_{b_i\in a\cap b_{>d}} \frac{P(b_i)}{P(b)}$$
and the latter term is infinitesimal, as it is less than $\frac{P(b_{>d})}{P(b)}$. Now we can apply lemma \ref{st}: taking the standard part (and noting the sums here are all finite) we get
 $$st(P(a|b))= st\sum_{b_i\in a\cap b_d}\frac{P(b_i)}{P(b)} = st \sum_{b_i\in a\cap b_d}\frac{P(b_i)}{P(b_d)} = \sum_{b_i\in a\cap b_d} C(b_i,b_d)$$
 Now $C(b_i,b_d)=C(b_i,b)$ as by axiom 3. of Popper functions $C(b_i,b)=C(b_i,b_d)C(b_d,b)$ and $C(b_d,b)=1$. So we must have:  $$\sum_{b_i\in a\cap b_d} C(b_i,b_d)=\sum_{b_i\in a\cap b_d}C(b_i,b)=C(a,b)$$
 Putting this all together we get $$st(P(a|b))=C(a,b)$$ as required.
\end{proof}\\
Thus part 1 of Theorem \ref{main theorem} is proved.

We now describe a slight extension of this construction which we will use to construct an $NAP$ space for an infinite language. Let $\La_i$ be a finitely generated language with $\{b_1\dots b_m\}$ the normal atoms and $(\Om_i, w_i, \Lambda_i)$ be the $NAP$ space generated as above. Let $\bar{a}^i\subset \Om_i$ be the interpretation of the proposition $a$ of $\La_i$. Suppose we have a different language $\La_p$ (which may not be finitely generated, and may include some of the propositions of $\La$) interpreted on a domain $\Om_p$ (which may be uncountable), and let  $\bar{a}^p\subset \Om_p$ be the interpretation of the proposition $a$ of $\La_p$. Suppose we also have a weight function $w_p:\Om_p\rightarrow\mathbb{R}^+$. 

We take $\La$ to be the language generated by the atomic propositions of $\La_i$ and $\La_p$ augmented by the atomic proposition $p$ not in either $\La_i$ or $\La_p$ ($p$ stands for `previous'), and define a new $NAP$ space as follows. We set $\Om=\Om_i\cup\Om_p$ (assuming $\Om_i$ and $\Om_p$ are disjoint, otherwise we could make them so be introducing an index), and the give the elements the same weighting as before so $w(x)=w_i(x)$ if $x\in\Om_i$ and $w(x)=w_p(x)$ if $x\in\Om_p$. We interpret $p$ as $\Om_p\subset\Om$ and any other atomic $a$ in $\La$ as $\bar{a}^i \cup\bar{a}^p$ (where $\bar{a}^x=\emptyset$ if $a\notin \La_x$).
Note that for each of the normal atoms $b_j$s of $\La_i$ there is a sentence $b_j'=b_i\wedge\neg p$ of $\La$ which has the interpretation $\bar{b}_j$ as before.

 We want $st(P(p|a))=0$ if $a$ is a non-contradictory proposition of $\La_f$, so we treat $p$ as having a rank greater than any of the normal atoms of $\La_i$:  so we can define a directed set $\Lambda$ on $\Om$ in the same way as in Lemmas \ref{directedset}, such that $\Lmb$ satisfies (\ref{lambda}) and (\ref{lambda2}) as above, and in addition $|\bar{b}_j\cap\lmb|>|\bar{p}\cap\lmb|^{2}$ for any normal atom $b_j$ of $\La_i$.

Let $P$ be the $NAP$ function defined by $(\Om, w,\Lambda)$. It then follows that for any $a$ and $b$ in $\La_i$ we have $st(P(a|b))=C(a,b)$. To see this we can just treat $p$ as an extra normal atom and the argument before will go through, as any $a$ or $b$ in $\La_i$ that is not a contradiction will be implied by one of the $b_i$'s, so  $st(P(a))=st(P(a\wedge\neg p))$ and $st(P(b))=st(P(b\wedge\neg p))$. Thus: $$st(P(a|b))=st(\frac{P(a\wedge p)+P(a\wedge \neg p)}{P(b\wedge p)+P(b\wedge \neg p)})=st(\frac{P(a\wedge \neg p)}{P(b\wedge \neg p)})=C(a,b)$$


\subsection{Infinitely generated languages}

In this section we demonstrate how to extend the above result to an arbitrary language $\La$. McGee uses an ultraproduct construction to do this, but his straightforward approach\footnote{The ultraproduct construction in itself is not exactly simple, but its application is well developed, and a standard technique in model theory.} will not work here. The problem is that taking an ultraproduct will yield a weight function taking values in a non-standard extension of the real numbers, which is not how $NAP$ works.\footnote{There is no prima facie reason why $NAP$ should not be extended to allow for this, after all the same additivity principle could be introduced in such a space, so that the probability values would take values in a further extension of the real numbers. However, how we could get the infinite additivity properties of $NAP$ in the ultraproduct is not obvious.} The construction given here is rather different, although ultraproduct techniques are employed. Readers who are not familiar with ultrafilters and ultraproducts may skip this proof, as it is somewhat technical, and the basic idea is really the same is taking an ultraproduct: we combine, with some interesting mathematics, the models for finitely generated sublanguages into a model for the whole language.
\bigskip

Let $\La$ be an infinitely generated propositional language and $C$ a Popper function on $\La$.  Let $\kappa=|\La|$ and $\langle i_\alpha:\alpha<\kappa\rangle$ be a wellordering of all finite systems of atomic sentences from $\La$. For each $\alpha<\kappa$ let $\La_{i_\alp}$  be the language generated by the atomic propositions in $i_\alp$, so each finitely generated sublanguage of $\La$ is just some $\La_{i_\alpha}$.
 \\
 It is easy to see that for every for every finitely generated sublanguage $\La_{i_\alp}$ we can construct is an $NAP$ space $(\Om_{i_\alp},w_{i_\alp},I_{\Lmb_{i_\alp}})$ which satisfies \emph{standard part agreement} (\ref{standard agreement}) - we simply apply the method of the previous section for the language $\La_{i_\alp}$ and the Popper function $C$ restricted to this sublanguage.
We now build a new $NAP$ space from these.\\

\textbf{The sample space:} set 
$$\Om=\{\langle x,\alp\rangle :x\in\Om_{i_\alp}\}=\bigcup_{\alp<\kp} (\Om_{i_\alp}\times\{\alp\})$$ 
Essentially $\Om$ is just the union of all the $\Om_i$'s - the ordered pair construction is just to make sure the elements for different sublanguages are kept distinct. \\

\textbf{The interpretation:} 
For any proposition $a$ in $\La$ the interpretation of $a$ is $\{\langle x,\alp \rangle: a\in\La_{i_\alp} \wedge x\in\bar{a}_{i_\alp}\}$ where $\bar{a}_i$ is the interpretation of $a$ in our model for $\La_i$. Note that this is consistent with $\Om$ being the interpretation of a tautology.\\

\textbf{The weight function:} For $\langle x,\alp\rangle\in\Om$ set 
$w(\langle x,\alp\rangle)=w_{i_\alp}(x)$.
Clearly this is well defined on all of $\Om$ and always strictly positive.\\

\textbf{A directed set?} 

Here we must switch our approach from simply using a directed set, where any ultrafilter including that directed set will give the desired properties, to actually using the ultrafilter directly. \\

For each $\alpha<\kappa$ we will define an $NAP$ space $(\Om_{\alp},w_{\alp},{\Lmb_{\alp}})$, distinct from $(\Om_{i_\alp},w_{i_\alp},{\Lmb_{i_\alp}})$ above, except for in the first case where we set $(\Om_{0},w_{0},{\Lmb_0})=(\Om_{i_0},w_{i_0},I_{\Lmb_{i_0}})$. For $\alp>0$ we construct the $NAP$ space as in the extension at the end of the previous section. Take $\La_i$, $\Om_i$ etc. there as $\La_{i_\alp}$, $\Om_{i_\alp}\times\{\alp\}$ etc.

We take $\La_{p_\alp}$ etc. to encompass the previous finite sublanguages, being the language generated by all the atomic propositions of the $\La_{i_\beta}$ for $\beta<\alp$. Set $\Om_{p_\alp}=\bigcup_{\beta<\alp}\langle \Om_{i_\beta}\times\{\beta\}\rangle$, the interpretation of any $a$ in $\La_{p_\alp}$ just as on $\Om$ but restricted to $ \Om_{p_\alp}$, and $w_{p_\alp}=w\restriction \Om_{p_\alp}$. 

Then following the earlier construction we have $\Om_\alp=\Om_{i_\alp}\cup\Om_{p_\alp}= \bigcup_{\beta\leq\alp}\langle \Om_{i_\beta}\times\{\beta\}\rangle\subset\Om$ and $w_\alp=w\restriction\Om_\alp$. Set $\Lmb_\alp$ to be the directed set as constructed there and take  $\U_\alp$ to be a corresponding ultrafilter. Note that the probability function $P_\alp$ generated by the $NAP$ space $(\Om_\alp, w_\alp,\U_\alp)$ will satisfy \emph{standard part agreement} (\ref{standard agreement}) for any propositions from $\La_{i_\alp}$, although not necessarily for all propositions in $\La_{i_\alp}\cup\La_{p_\alp}$.\\

We construct the ultrafilter for our full $NAP$ space from these $\U_\alp$ together with an ultrafilter on $\kp$.
Let $\tilde{\U}$ be an ultrafilter on $\kp$ such that for each atomic proposition $a$ from $\La$, $\{\alp\in\kp: a\in \La_{i_\alp}\}\in\tilde{\U}$ 
\footnote{This is possible as such sets have the \emph{finite intersection property} and so can be extended to an ultrafilter \cite{hyperreals}.}
and note that all \emph{end-seqments} $\{\alp\in\kp:\alp>\beta\}$ are in $\tilde{\U}$ . This is because, as $\La$ has $\kp$ many atomic propositions, for any $\beta<\kp$ there is some proposition $a\in\La$ which is not in any $\La_{i_\alp}$ for $\alp<\beta$.\\

Now we can define the ultrafilter $\U$ on $\mathcal{P}_{fin}(\Om)$ which we will use for our $NAP$ space. First to ease notation, for $X\subset\mathcal{P}_{fin}(\Om)$ set $X_\alp:=X\cap\mathcal{P}_{fin}(\Om_\alp)$.
Define $\U \subset \mathcal P(\mathcal{P}_{fin}(\Om))$ by $$X\in\U\leftrightarrow \{\alp:X_\alp\in\U_\alp\}\in\tilde{\U}$$
\begin{lemma}
$\U$ is a non-principal ultrafilter, and $\U$ is fine.
\end{lemma}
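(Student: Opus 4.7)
My plan is to reduce each of the three assertions about $\U$ to the corresponding properties of the component ultrafilters $\U_\alp$ (on $\mathcal{P}_{fin}(\Om_\alp)$) and $\tilde{\U}$ (on $\kp$), using the slicing map $X \mapsto X_\alp := X \cap \mathcal{P}_{fin}(\Om_\alp)$. The algebraic fact that drives everything is that, since $\Om_\alp \subseteq \Om$, slicing commutes with the Boolean operations relative to $\mathcal{P}_{fin}(\Om_\alp)$: one has $(X \cap Y)_\alp = X_\alp \cap Y_\alp$, $X \subseteq Y$ implies $X_\alp \subseteq Y_\alp$, and $(\mathcal{P}_{fin}(\Om)\setminus X)_\alp = \mathcal{P}_{fin}(\Om_\alp) \setminus X_\alp$. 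Once these identities are in hand, every ultrafilter axiom for $\U$ transfers mechanically.

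For the filter axioms (closure under supersets and finite intersections, exclusion of $\emptyset$), the identities let me rewrite each requirement componentwise, and the corresponding axiom for $\U_\alp$ together with that for $\tilde{\U}$ does the job. The only step with any real content is maximality. Given $X \subseteq \mathcal{P}_{fin}(\Om)$, I set $S := \{\alp : X_\alp \in \U_\alp\}$ and $T := \{\alp : (X^c)_\alp \in \U_\alp\}$. Since each $\U_\alp$ is itself an ultrafilter and $X_\alp$, $(X^c)_\alp$ partition $\mathcal{P}_{fin}(\Om_\alp)$, we have $S \cup T = \kp$; then the ultrafilter property of $\tilde{\U}$ forces one of $S$, $T$ into $\tilde{\U}$, hence one of $X$, $X^c$ into $\U$.

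For non-principality, I take any $A \in \mathcal{P}_{fin}(\Om)$ and note that, since $A$ is finite and the $\Om_\alp$ form an increasing chain with $\bigcup_\alp \Om_\alp = \Om$, there is some $\alp_0$ with $A \in \mathcal{P}_{fin}(\Om_\alp)$ for all $\alp \geq \alp_0$; for smaller $\alp$ the slice $\{A\}_\alp$ is empty. Freeness of each $\U_\alp$ then excludes $\{A\}$ in both regimes, so $\{\alp : \{A\}_\alp \in \U_\alp\} = \emptyset \notin \tilde{\U}$, and thus $\{A\} \notin \U$.

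For fineness, fix $a = \langle x,\gamma\rangle \in \Om$ and set $X_a := \{\lambda \in \mathcal{P}_{fin}(\Om) : a \in \lambda\}$. Whenever $\alp \geq \gamma$ we have $a \in \Om_\alp$, so $(X_a)_\alp = \{\lambda \in \mathcal{P}_{fin}(\Om_\alp) : a \in \lambda\}$, which lies in $\U_\alp$ by fineness of each component ultrafilter. Hence $\{\alp : (X_a)_\alp \in \U_\alp\}$ contains the end-segment $\{\alp : \alp \geq \gamma\}$, and end-segments are in $\tilde{\U}$ by the authors' construction. There is no substantial obstacle: the whole lemma is a careful bookkeeping exercise applied to the defining equivalence $X \in \U \Leftrightarrow \{\alp : X_\alp \in \U_\alp\} \in \tilde{\U}$, and the one place to stay alert is the maximality step, where the right observation is not that $S$ and $T$ are complementary but that they cover $\kp$.
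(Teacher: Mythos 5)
Your proof is correct and follows essentially the same route as the paper's: every property of $\U$ is read off from the component ultrafilters $\U_\alp$ and $\tilde{\U}$ via the slicing map $X \mapsto X_\alp$ and the identities showing that slicing commutes with the Boolean operations relative to $\mathcal{P}_{fin}(\Om_\alp)$. If anything, your treatment of non-principality is the more careful one: the paper justifies this step by asserting that $\{\alp : X_\alp \neq \emptyset\}$ is finite for finite $X$ (which is not so, since the $\Om_\alp$ increase and exhaust $\Om$, making that set an end-segment), whereas your observation that each slice $\{A\}_\alp$ is either empty or the singleton $\{A\}$, hence never in the free ultrafilter $\U_\alp$, so that $\{\alp : \{A\}_\alp \in \U_\alp\} = \emptyset \notin \tilde{\U}$, is the argument that actually does the work.
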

\begin{proof}
\begin{enumerate}
\item \textbf{supersets} \\Let $Y\subseteq\Om$, $X\subseteq Y$. Then we have $Y_i\supseteq X_i$ for all $\alp$ so $$X\in\U \Rightarrow \{\alp:X_\alp\in\U_\alp\}\in\tilde{\U} \Rightarrow \{\alp:Y_\alp\in\U_\alp\}\in\tilde{\U} \Rightarrow Y\in\U$$
where the first and third implications are simply by definition, and the second as each $\U_\alp$ is an ultrafilter.

\item \textbf{intersection} \\Let $X,Y\in\U$. We have $(X\cap Y)_\alp=X_\alp\cap Y_\alp$ so:
$$ \{\alp:X_\alp\in\U_\alp\}, \{\alp:Y_\alp\in\U_\alp\}\in\tilde{\U} \Rightarrow \{\alp:X_\alp\in\U_\alp\wedge Y_\alp\in\U_\alp\}\in\tilde{\U} $$ $$\Rightarrow  \{\alp:X_\alp\cap Y_\alp\in\U_\alp\}\in\tilde{\U} \Rightarrow X\cap Y\in\U$$
where the first implication is because $\tilde{\U}$ is an ultrafilter and the second because all the $\U_\alp$'s are.

\item \textbf{ultra}\\ First note that $\Om_\alp\setminus X_{\alp}=(\Om\setminus X)_\alp$. Thus
 $$X\in\U \Leftrightarrow \{\alp:X_\alp\in\U_\alp\}\in\tilde{\U} \Leftrightarrow \{\alp:X_\alp^c\notin\U_i\}\in\tilde{\U}$$ $$ \Leftrightarrow \{\alp:X_\alp^c\in\U_\alp\}\notin\tilde{\U} \Leftrightarrow X^c\notin{\U}$$

\item \textbf{non-principal} \\ It is enough to show that no finite set is in $\U$ (see \cite[p.~38]{hyperreals}). But this is clear as $X$ finite $\Rightarrow \{\alp:X_\alp\neq\emptyset\}$ is finite $\Rightarrow X\notin \U$.

\item \textbf{fine} Let $x\in \Om$. Then there is some $\alp\in\kp$ such that $x\in\Om_{i_\alp}$, so then $x\in\Om_\beta$ for all $\beta\geq\alp$. Let $X=\{y\in \mathcal{P}_{fin} (\Om):x\in y\}$. Then $X_\beta=\{y\in\mathcal{P}_{fin}(\Om_\beta):x\in y\}$ so as each $\U_\beta$ is fine, for all $\beta\geq\alp$ we have $X_\beta\in\U_\beta$ and thus $X\in\U$.
\end{enumerate}

\end{proof}

Now we have defined an $NAP$ space $(\Om,w,\U)$, so we set $P:\mathcal{P}_{fin}(\Om)\rightarrow \mathbb{R}^*$ to be the corresponding $NAP$ function. It remains to show that for any $a,b$ from $\La$ we have $st(P(a|b))=C(a,b)$.

To do this we exploit an alternative interpretation of the $NAP$ function generated from above. In the section on $NAP$ we defined $P(a)$ to be the ultrafilter equivalence class of sequence $\langle P(a|\lmb):\lmb\in \mathcal{P}_{fin}(\Om)\rangle$, but we can also think of $P(a)$ as the ultraproduct of $P_\alp(a)$ under the ultrafilter $\tilde\U$. To see this is equivalent, first we need to see that the hyperreal fields generated by the two processes are the same. But this is straightforward: 
For any functions $\varphi, \psi:\mathcal{P}_{fin}(\Om)\rightarrow\re$ we have:
\begin{eqnarray*}
\varphi \approx _{\mathcal{U}}\psi \Leftrightarrow \exists Q\in \mathcal{U},\forall \lambda \in Q,\ \varphi (\lambda )=\psi (\lambda )\\
\Leftrightarrow \exists \tilde{Q}\in \tilde{\mathcal{U}},\forall \alp\in \tilde{Q},\exists X \in \U_\alp, \forall\lambda \in X,\ \varphi (\lambda )=\psi (\lambda)\\
\Leftrightarrow \exists \tilde{Q}\in \tilde{\mathcal{U}},\forall \alp\in Q, \varphi \approx _{\mathcal{U}_\alp}\psi 
\Leftrightarrow [\varphi]\approx_{\tilde{\U}} [\psi]
\end{eqnarray*}
where we take $[\varphi](\alp)=[\varphi]_{\U_\alp}$, so $[\varphi]$ is the function taking $\alp$ to the equivalence class of $\varphi$ under $\U_\alp$.
It is not hard to see that this reasoning shows the hyperreal fields generated by the two processes are isomorphic under the obvious mapping. Thus it makes sense to ask whether $$P(a)=[P_\alp(a)]_{\tilde{\U}}.$$ This does in fact hold as by definition of $\tilde{\U}$ we have $\{\alp:a\in\La_{i_\alp}\}\in\tilde{\U}$ and for any $\alp$ with $a$ in $\La_\alp$ the definition of the $NAP$ function gives $P_\alp(a|\lmb)=P(a|\lmb)= \frac{\Sigma_{x \in a \cap \lambda}   (w(x))}{\Sigma_  {x \in\lambda}   (w(x))    }$, so these are the same function on a set in the ultrafilter $\tilde{\U}$.

We have gone to all this trouble because in order to show $P$ agrees with the Popper function $C$ we will use the fact that the functions $P_\alp$ agree with $C$, and then, using the latter presentation, apply \L os' Theorem, which states that any first-order sentence $\phi$ that holds in ultrafilter-many models will also be true in the ultraproduct model. Now for the detail.

Fix $a,b$ in $\La$ and set $r=C(a,b)$. Let the first order (not propositional) language $\La'$ include the language of real analysis plus constant terms `$p_{a\wedge b}$', `$p_{b}$' and `$t_{r}$', and the unary  predicate $N$. We define models $\mathcal{M}_\alp$: set the domain $D_\alp$ to be the nonstandard reals produced by the ultrafilter $\U_\alp$. Give the language of real analysis its normal interpretation over the hypereals, and set $\mathfrak{I}_\alp(t_r)=r$, $\mathfrak{I}_\alp(N)=\nat$ (the true natural numbers, not their non-standard extension under $\U_\alp$) and where $a,b\in \La_{i_\alp}$ set $\mathfrak{I}_\alp(p_{a\wedge b})=P_\alp(a\cap b)$ and $\mathfrak{I}_\alp(p_b)=P_\alp(b)$ and be arbitrary otherwise.\
 Now for each $\alp$ such that $a,b\in\La_{i_\alp}$ we have from our construction of $P_\alp$ that $st(P_\alp(a|b))=r$, so for such $\alp$'s we have: 
 $$ \mathcal{M}_\alp\vDash``\forall n\in N\; \left|\frac{p_{a\wedge b}}{p_b}-t_r\right|<1/n"$$  

Now set $\mathcal{M}$ to be the model attained by taking the ultraproduct of the models $M_\alp$ under the ultrafilter $\tilde{\U}$. 
As $\{\alp<\kp:a,b\in\La_{i_\alp}\}$ is in $\tilde{\U}$, \L os' Theorem tells us that the above sentence interpreted in the ultraproduct model will also hold, i.e.
 $$ \mathcal{M}\vDash``\forall n\in N\; \left|\frac{p_{a\wedge b}}{p_b}-t_r\right|<1/n"$$ 
What is the ultraproduct model? The members of the domain are the $\tilde{\U}$ equivalence classes of sequences of objects $\langle a:a\in D_\alp\rangle$, so by the earlier discussion these are exactly the hyperreals generated by $\U$. The interpretation on constants in $\La'$ is straightforward, $\mathfrak{I}(t_r)=[\langle \mathfrak{I}_\alp(t_r):\alp<\kp\rangle]_{\tilde{\U}}$ etc. So again by the earlier discussion we see that $\mathfrak{I}(p_b)=[P_\alp(b)]_{\tilde{\U}}=P(b)$ and $\mathfrak{I}(p_{a\wedge b})=[P_\alp(a\cap b)]_{\tilde{\U}}=P(a\cap b)$. The interpretation of $N$ in the ultraproduct model will be a non-standard extension of the natural numbers, but importantly it will include all of $\nat$ so we can conclude that: $$\forall n\in\nat,\left| \frac{P(a\wedge b)} {P(b)} -r\right|<1/n$$ In other words, $$st(\frac{P(a \wedge b)}{P(b)})=r=C(a,b)$$
So we're done.


\subsection{Discussion}

The upshot is that for every Popper function $C(x,y)$, there is an $NAP$ function $P$ that is pointwise infinitely close to $C$. But in the present context, this leaves considerable latitude for the answer to the question: \emph{how} close is $P$ to $C$?

We have seen that for every $r \in \mathbb{R}$, we can find in $\mathbb{R}^*$ hyperreals that are of different degrees of infinitesimal closeness to $r$. We have shown that the infinitesimal closeness of the $NAP$ function $P$ that we have constructed to the Popper function $C$ that was given is at least pointwise of rank 1. But one might well ask\footnote{Indeed: an anonymous referee did ask.} whether $P$ can be chosen in such a way that its closeness to $C$ is pointwise always of a higher rank. We leave this as a question for further research.


\section{Non-Archimedean probabilities and lexicographic probabilities}

In section \ref{McG} we noted that for finitely generated languages $\mathcal{L}_i$, any value $Pr_i(A)$ of a non-Archimedean probability function can be given a lexicographic representation as a finite sequence of classical real-valued probability values. In the literature on lexicographical probabilities, generalised probabilities are sometimes considered that represent probabilities as \emph{$\omega$-sequences} of classical real-valued probability values.\footnote{For a discussion of the theory of lexicographic probabilities, see \cite{Blume et al 1991}.} The naive conjecture that non-Archimedean probability functions for \emph{(countably) infinitely} generated languages can be represented as an $\omega$-sequence of classical real-valued probability values has been shown to be incorrect \cite[example 4.8, example 4.10]{Halpern 2010}.

The problem is, roughly, one of non-well-foundedness.\footnote{Halpern briefly discusses the idea of non-well-founded lexicographical probability functions in \cite[p.~165]{Halpern 2010}, and dismisses it.} We have seen in section \ref{McG} that the terms in the polynomial expression of $Pr_i(A)$ represents a `level of infinitesimality'. But (as we shall shortly see), the collection of ranks for an infinitely generated language does not in general form a well-ordering. So we will represent $NAP$ probability values lexicographically as \emph{non-well-ordered} sequences of real numbers. We will concentrate on the simplified case of $NAP$ functions that have no associated weight function, or, equivalently, for which the associated weight function is constant 1. 


\subsection{Extending van Fraassen's notion of rank}

First we extend van Fraassen's definition of the notion of rank to $NAP$ functions $P$.

Let $F$ be the non-Archimedean field of which $ran(P)$ is a substructure. Then:
\begin{definition}
$\forall a,b, \in F: a \approx_{rk} b \equiv \exists r \in F:  \infty > \left \vert st[r] \right \vert \neq 0 \textrm{ and } a = r \cdot b$
\end{definition}
So numbers in the field $F$ are of the same rank if they are not infinitely small or infinitely large with respect to each other. 

It is immediate that $\approx_{rk}$ is an equivalence relation. So we define \emph{ranks} as equivalence classes of $\approx_{rk}$:

\begin{definition}
$\forall a \in F: rk(a) \equiv [a]_{\approx_{rk}}$, and $\mathcal{R} \equiv \{ rk(a) : a  \in F  \} .$
\end{definition}
Thus ranks can be seen as \emph{locally Archimedean} substructures of the non-Archimedean field $F$.

$\mathcal{R}$ is a generalisation of van Fraassen's notion of rank to infinitely generated languages. The elements of $\mathcal{R}$ are linearly ordered in a natural way (induced by the linear ordering on $F$): the \emph{higher} the rank of a number, the larger its `degree of infinitesimality', and the probability value $0 \in F$ can be seen as the unique element of $F$ of \emph{maximal rank} $\underline{0}$. But this natural ordering on $\mathcal{R}$ is not in general a well-ordering.


We define the rank $rk(A)$ of an \emph{event} $A \subseteq \Omega$ as $rk(P(A))$. 


Now we arbitrarily choose, for each $\underline{\alpha} \in \mathcal{R}$, a positive \emph{rank unit value} $1_{\underline{\alpha}}$ of rank $\underline{\alpha}$.

Let an $NAP$ function $P$ be given. We want to define a lexicographical representation of $P$. For every  $A \subseteq \Omega$,   we want to define a lexicographial ordering $<_L$ such that  $$ \forall A,B \subseteq \Omega:  P(A) < P(B) \Leftrightarrow P(A) <_L P(B) .$$


\subsection{Transfinite sums of elements of $F$}

The idea is to approximate $P(A)$ by means of a well-ordered (and generally transfinite) sequence of \emph{approximations}. The definition of these approximations involves transfinite sums of elements of $F$. So for our construction we will need a notion of sum of elements of $F$ that makes sense also for all transfinite $\alpha$. We define an appropriate notion of sum using a second ultrafilter construction (recognising that $F$ itself was already generated by an ultrafilter construction).

Let $\mu$ be an ordinal that is chosen (with foresight) to be large enough to enumerate the stages of approximation of elements of $F$.

\begin{definition}
$$ S_{\alpha} \equiv \{ S \in [\mu]^{< \omega} : \alpha \leq \textrm{min}(S)    \}   $$

$$\mathbb{S} \equiv \{ S_{\alpha}: \alpha     \}$$

\end{definition}

Clearly $\mathbb{S}$ has the finite intersection property. So let $U^*$ be an ultrafilter on $[\mu]^{< \omega}$ extending $\mathbb{S}$. Then $U^*$ can be taken to determine an appropriate notion of sum in the following way.

We define sums $\sum_{\alpha < \beta}f(\alpha)$, with $\beta \leq \mu$, inductively. So we assume $\sum_{\alpha < \beta_0}f(\alpha)$ to be defined already for all $\beta_0 < \beta$, and define $\sum_{\alpha < \beta}f(\alpha)$.


\begin{definition}\label{sumdef}
Let $f$ be any function from $\beta$ to $F$. For any $S \in [\beta]^{< \omega}$ with $\beta_0 = \textrm{min}(S)$, let $$ f(S) \equiv 
\sum_{\alpha < \beta_0}f(\alpha) +
 \sum_{\alpha \in S \backslash \{ \beta_0 \}} f(\alpha)  .$$
\end{definition}
Then $f(S)$ is a \emph{finite} sum of elements of $F$, which is of course well-defined because $F$ is a field.

Now we identify modulo agreement on the ultrafilter $U^*$:
\begin{definition}
For any functions $f,g$ from $\beta$ to $F$: $$  f \sim g \equiv \{  S \in [\mu]^{< \omega}: f(S) = g(S) \} \in \mathcal{U}^*   .$$

\end{definition}

This partitions the functions $f$ from $\mu$ to $F$ into equivalence classes $[f]_{\mathcal{U}^*}$, and they form a non-Archimedean field $F^*$ into which $F$ is canonically embedded in the same way as $\mathbb{R}$ is embedded in $F$. 

Now we set:

\begin{definition}
$$ \sum_{\alpha < \beta}f(\beta) \equiv \lceil [f]_{\mathcal{U}^* }\rceil_F ,$$
\end{definition}
where $\lceil a^* \rceil_F$ is the unique element $a\in F$ that is closest to the element $a^* \in F^*$.

Not all such sums will be well-defined. Intuitively, it may be the case that $\mid [f]_{\mathcal{U}^*}\mid$ is ``infinitely large'' with respect to all elements of $F$, in the same way that some elements of $F$ are ``infinitely large'' with respect to all elements of $\mathbb{R}$.  But if $[f]_{\mathcal{U}^*}$ is bounded from below and from above by elements of $F$ as canonically embedded in $F^*$, then the sum \emph{is} well-defined, because of transfer. The argument goes as follows. Using the $*$-notation from non-standard analysis, we move from the $\mathbb{R}$ to the non-Archimedean field $\mathbb{R}^* = F$ and then to $\mathbb{R}^* = F^*$. Then using this $*$-notation, we know from the completeness of the real field that 
$$ \forall r^* \in \mathbb{R}^* \exists r \in \mathbb{R} \forall s \in \mathbb{R}: s \neq r \rightarrow \mid r^* - s \mid > \mid r^* - r \mid .  $$ By $*$-transfer, this yields: $$ \forall r^{**} \in \mathbb{R}^{**} \exists r^* \in \mathbb{R^*} \forall s^* \in \mathbb{R^*}: s^* \neq r^* \rightarrow \mid r^{**} - s^* \mid > \mid r^{**} - r^* \mid .  $$






\subsection{Approximations}\label{apx}

Now we are ready to define the approximations. Let a $NAP$ function $P$ and an event $A \subseteq \Omega$ be given.

The idea is to approximate $P(A)$ as follows. $P(A)$ is of a certain rank $rk(A)$: call this rank $\underline{0}^A$ (``the rank of the $0$th approximation of $A$"). So we will in a first stage approximate $P(A)$ by the element $A_0 \cdot 1_{\underline{0}^A}$ where $A_0 \in \mathbb{R}$ is such that $A_0 \cdot 1_{\underline{0}^A}$ is closest to $P(A)$. 
But then it is likely that there is a non-zero \emph{remainder}  $P(A) -  A_0 \cdot 1_{\underline{0}^A}$. This remainder will then be of a \emph{higher} rank $\underline{1}^A$ than $\underline{0}^A$. So we will approximate the remainder by the element of the form $A_{\underline{1}^A} \cdot 1_{\underline{1}^A}$, with $A_{\underline{1}^A}$ the unique real number such that that $A_{\underline{1}^A} \cdot 1_{\underline{1}^A}$ is closest to the remainder. This will leave us with a remainder of a still higher rank. Thus we continue into the transfinite. 

The details of the construction go as follows.

We define the remainders and the approximating real numbers inductively. We take them to be defined for all $\alpha < \beta$, and then first define the remainder of stage $\beta$:

\begin{definition}\label{remainderdef}

$$  A_{\beta}^r \equiv P(A) - \sum_{\alpha < \beta} A_{\alpha} \cdot 1_{\underline{\alpha}^A}  $$

\end{definition} 

This remainder will be of a certain rank:
\begin{definition}
$$1_{\underline{\beta}^A} \equiv  rk(A^r_{\beta} ) $$

\end{definition}

On the basis of this, we then define the \emph{approximation} of $P(A)$ at stage $\beta$:
\begin{definition}
$A_{\beta} $ is the unique $r \in \mathbb{R}$ such that $r \cdot 1_{\underline{\beta}^A}$ is closer to $  A_{\beta}^r$ than any number of the form $s \cdot 1_{\underline{\beta}^A}$ for $s \in \mathbb{R}$ such that $s \neq r$
\end{definition}

Of course these definitions are only well-formed if the sums involved (definition \ref{remainderdef}) are well-defined. But this is the case:

\begin{proposition}
For all $\beta < \mu$, $A_{\beta}^r$ is well-defined.

\begin{proof}
By transfinite induction on $\beta$. We use boundedness considerations that hold for ultrafilter-large families of finite sets and are then globally preserved.
\end{proof}
\end{proposition}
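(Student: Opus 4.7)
The plan is a transfinite induction on $\beta$. The base case $\beta = 0$ is immediate: the sum in Definition \ref{remainderdef} is empty, so $A_0^r = P(A) \in F$. The successor case $\beta = \gamma + 1$ is also routine: the inductive hypothesis delivers $A_\gamma^r \in F$, and from it the rank $\underline{\gamma}^A$ and the real coefficient $A_\gamma$, so that $\sum_{\alpha < \beta} A_\alpha \cdot 1_{\underline{\alpha}^A}$ is produced by a single field addition in $F$.

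The substantive work is at a limit $\beta$. By the paragraph preceding the proposition it suffices to show that the equivalence class $[f]_{\mathcal{U}^*} \in F^*$ of the function $f(\alpha) := A_\alpha \cdot 1_{\underline{\alpha}^A}$ is bounded above and below by elements of $F$. I would first prove the monotonicity fact that $|A_\gamma^r|$ is non-increasing in $\gamma$ at successor stages: since $A_\gamma \cdot 1_{\underline{\gamma}^A}$ is by construction the best real-coefficient approximation of $A_\gamma^r$ and $0$ is a competitor, $|A_{\gamma+1}^r| \leq |A_\gamma^r|$; consequently $|A_\gamma \cdot 1_{\underline{\gamma}^A}| \leq 2|A_\gamma^r|$. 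Next, unpacking Definitions \ref{sumdef} and \ref{remainderdef} gives, for any finite $S \subseteq \beta$ with $\gamma := \min(S)$,
\begin{equation*}
f(S) = P(A) - A_\gamma^r + \sum_{\alpha \in S \setminus \{\gamma\}} A_\alpha \cdot 1_{\underline{\alpha}^A},
\end{equation*}
and each term of the correction sum has rank strictly higher than $\underline{\gamma}^A$.

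The ultrafilter structure now does the work. Since $\{S : \min(S) \geq \gamma\} \in \mathcal{U}^*$ for every $\gamma < \beta$, on this $\mathcal{U}^*$-large set $|A_{\min(S)}^r| \leq |A_\gamma^r|$ and every correction term has rank strictly greater than $\underline{\gamma}^A$. Pinning this down for all $\gamma < \beta$ bounds $[f]_{\mathcal{U}^*}$ within distance $|A_0^r|$ of $P(A)$ in $F^*$, so that the transfer argument recorded earlier supplies the unique closest element $\lceil [f]_{\mathcal{U}^*} \rceil_F \in F$, and $A_\beta^r := P(A) - \lceil [f]_{\mathcal{U}^*} \rceil_F \in F$ is well-defined.

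The main obstacle is the non-uniform cardinality of the finite sets $S$: the naive pointwise bound on the correction sum scales with $|S|$, which is unbounded over $[\mu]^{<\omega}$. The resolution is exactly the phrase in the paper's proof sketch---``boundedness considerations that hold for ultrafilter-large families of finite sets and are then globally preserved''---and it exploits that rank in the non-Archimedean field $F$ is invariant under multiplication by positive integers: on $\{S : \min(S) \geq \gamma\}$ the correction sum's rank remains strictly above $\underline{\gamma}^A$ regardless of $|S|$, and letting $\gamma$ range unboundedly below $\beta$ forces $[f]_{\mathcal{U}^*} - P(A)$ into a region bounded in $F^*$ by elements of $F$. (If a smoother argument is desired, one may additionally arrange that $\mathcal{U}^*$ concentrates on $\{S : |S| = 1\}$, which is compatible with the finite-intersection property used to construct $\mathcal{U}^*$ and collapses the correction sum to zero $\mathcal{U}^*$-almost everywhere.)
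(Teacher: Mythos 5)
Your proof is correct and follows exactly the route the paper indicates: transfinite induction on $\beta$, with the limit case handled by bounding $f(S)$ on the $\mathcal{U}^*$-large sets $S_\gamma$ and then invoking the transfer argument to extract $\lceil [f]_{\mathcal{U}^*}\rceil_F$. In fact the paper only records a two-line sketch of this argument, and your write-up (including the observation that the finite correction sums stay of strictly higher rank than $A_\gamma^r$, so the unbounded cardinality of $S$ is harmless) supplies precisely the details that sketch leaves implicit.
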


\begin{proposition}\label{succrkprop}

$ A^r_{\beta + 1}  = A^r_{\beta} - A_{\beta} \cdot 1_{\underline{\beta}^A}   $

\begin{proof}
The reason is that $S_{\beta} \in \mathcal{U}^*$ (degenerate case).
\end{proof}
\end{proposition}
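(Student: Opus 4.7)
My plan is to unwind Definition \ref{remainderdef} and reduce the claim to the identity
\[
\sum_{\alpha < \beta+1} A_\alpha \cdot 1_{\underline{\alpha}^A} \;=\; \sum_{\alpha < \beta} A_\alpha \cdot 1_{\underline{\alpha}^A} \;+\; A_\beta \cdot 1_{\underline{\beta}^A},
\]
which expresses the successor case of the transfinite sum. Subtracting both expressions from $P(A)$ then immediately yields $A^r_{\beta+1} = A^r_\beta - A_\beta \cdot 1_{\underline{\beta}^A}$.

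To prove the identity I would compare the two sides at the level of their defining $\mathcal{U}^*$-representatives. Write $g(\alpha) := A_\alpha \cdot 1_{\underline{\alpha}^A}$, and let $f_{\beta+1}$, $f_\beta$ be the partial-sum functions from Definition \ref{sumdef} corresponding to the sums indexed by $\beta+1$ and $\beta$ respectively. The crucial observation, flagged in the hint, is that $S_{\beta+1}$ (and a fortiori $S_\beta$) lies in $\mathcal{U}^*$ by the construction of the ultrafilter. On $S_{\beta+1}$ --- i.e.\ for $S$ with $\min(S) \geq \beta+1$ --- Definition \ref{sumdef} yields $f_{\beta+1}(S) = \sum_{\alpha < \min(S)} g(\alpha) + \sum_{\alpha \in S \setminus \{\min(S)\}} g(\alpha)$, in which the first summand already contains every relevant index $\alpha \leq \beta$ (including $\beta$), while the second summand ranges over indices strictly greater than $\min(S) \geq \beta+1$ and so agrees term-for-term with the corresponding tail of $f_\beta(S)$. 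A direct comparison of the two expressions then gives $f_{\beta+1}(S) = f_\beta(S) + g(\beta)$ on a set in $\mathcal{U}^*$, so the two equivalence classes in $F^*$ differ by the constant $g(\beta)$. Applying $\lceil \cdot \rceil_F$ to both equivalence classes preserves this constant shift and produces the required identity in $F$.

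The only real subtlety is the bookkeeping in Definition \ref{sumdef}: one has to keep careful track of which indices appear in the inductively stabilised head $\sum_{\alpha < \min(S)} g(\alpha)$ versus the finite tail $\sum_{\alpha \in S \setminus \{\min(S)\}} g(\alpha)$, and verify that the single index $\beta$ is picked up exactly once on the $\beta+1$ side and not at all on the $\beta$ side. This is precisely what the paper means by ``degenerate case''---because $S_{\beta+1} \in \mathcal{U}^*$, the successor step requires no genuine transfinite limit, only a comparison of finite expansions on an ultrafilter-large set. Well-definedness of the two sums is already supplied by the preceding proposition, so no further analytic work is required.
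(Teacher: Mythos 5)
Your proposal is correct and takes essentially the same route as the paper: the paper's entire proof is the remark that $S_{\beta}\in\mathcal{U}^*$ makes the successor step a ``degenerate case'', and your argument just unpacks this --- on an ultrafilter-large set of finite $S$ the defining representatives of $\sum_{\alpha<\beta+1}A_{\alpha}\cdot 1_{\underline{\alpha}^A}$ and $\sum_{\alpha<\beta}A_{\alpha}\cdot 1_{\underline{\alpha}^A}$ differ by exactly the single term $A_{\beta}\cdot 1_{\underline{\beta}^A}$, so subtracting both from $P(A)$ gives the identity. The bookkeeping subtlety you flag (which indices land in the head $\sum_{\alpha<\min(S)}$ versus the finite tail) is inherited from the imprecision of Definition \ref{sumdef} itself and is handled no less carefully in your write-up than in the paper's one-line proof.
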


\begin{lemma}\label{vanish}
If $rk(A^r_{\alpha}) \neq \underline {0}$, then $\alpha < \beta \leq \mu \Rightarrow rk(A^r_{\alpha}) < rk(A^r_{\beta}).$

\begin{proof}
Induction on $\beta$.

\noindent 1. $\beta = \gamma + 1$. By the previous proposition and the definition of $A_{\gamma}$ we have $rk(A^r_{\gamma + 1}) > rk(A^r_{\gamma})$. The result then follows by the induction hypothesis.

\noindent 2. $Lim(\beta)$. Let $\alpha < \beta$. Then by the induction hypothesis we have $rk(A^r_{\alpha}) < rk(A^r_{\alpha + 1})$. So it suffices to show that  $rk(A^r_{\beta}) \geq rk(A^r_{\alpha + 1})$. 

Since $S_{\alpha +1} \in \mathcal{U}^*$, we want to show that for each $S \in S_{\alpha +1}$, we have $$rk(P(A) - \sum_{\gamma \in S}A_{\gamma} \cdot 1_{\underline{\gamma}^A}) \geq rk(P(A) - \sum_{\gamma < \alpha + 1}A_{\gamma} \cdot 1_{\underline{\gamma}^A}) .$$ 

We only need to look at those $S$ such that $\textrm{min}(S)< \beta$, since only they can contribute to the sums. By the induction hypothesis, for such $S$ we have $$rk(P(A) - \sum_{\gamma < \textrm{min}(S)}A_{\gamma} \cdot 1_{\underline{\gamma}^A}) \geq rk(P(A) - \sum_{\gamma < \alpha + 1}A_{\gamma} \cdot 1_{\underline{\gamma}^A}) .  $$ This entails that indeed $$rk(P(A) - \sum_{S}A_{\gamma} \cdot 1_{\underline{\gamma}^A}) \geq rk(P(A) - \sum_{\gamma < \alpha + 1}A_{\gamma} \cdot 1_{\underline{\gamma}^A}) .  $$
\end{proof}
\end{lemma}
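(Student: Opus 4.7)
The plan is to prove Lemma \ref{vanish} by transfinite induction on $\beta$, splitting into the successor and limit cases. The intuition is that $A^r_{\beta}$ is supposed to shrink as $\beta$ grows, because at each stage we peel off the optimal approximation at the current rank, and what remains must live at a finer scale.

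For the successor step $\beta = \gamma + 1$, I would appeal directly to Proposition \ref{succrkprop}, which gives $A^r_{\gamma+1} = A^r_\gamma - A_\gamma \cdot 1_{\underline{\gamma}^A}$. Because $A_\gamma$ is chosen as the \emph{unique} real number making $A_\gamma \cdot 1_{\underline{\gamma}^A}$ closest to $A^r_\gamma$ among all real multiples of $1_{\underline{\gamma}^A}$, the error term $A^r_{\gamma+1}$ cannot itself still be of rank $\underline{\gamma}^A = rk(A^r_\gamma)$: if it were, some nearby real multiple would beat $A_\gamma$, contradicting optimality. Hence $rk(A^r_{\gamma+1}) > rk(A^r_\gamma)$, and chaining with the induction hypothesis yields $rk(A^r_\alpha) < rk(A^r_{\gamma+1})$ for every $\alpha \le \gamma$.

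For the limit case $Lim(\beta)$, fix $\alpha < \beta$. By the induction hypothesis applied at $\alpha+1 \le \beta$, we already have $rk(A^r_\alpha) < rk(A^r_{\alpha+1})$, so it suffices to show $rk(A^r_\beta) \ge rk(A^r_{\alpha+1})$. Here I would unpack Definition \ref{sumdef}: the transfinite sum defining $A^r_\beta$ is computed via the ultrafilter $\mathcal{U}^*$, and because $S_{\alpha+1} \in \mathcal{U}^*$ by construction, it is enough to control the finite partial sums $f(S)$ for $S$ with $\min(S) \geq \alpha+1$. For each such $S$, writing $\beta_0 = \min(S)$, we get $f(S) = \sum_{\gamma<\beta_0} A_\gamma \cdot 1_{\underline{\gamma}^A} + (\text{finite tail})$; the inductive conclusion applied at $\beta_0$ tells us the associated remainder $P(A) - f(S)$ has rank at least $rk(A^r_{\alpha+1})$. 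One then needs to show that this pointwise rank estimate is preserved when we pass to the ultrafilter limit.

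The main obstacle is precisely this last step: transferring rank inequalities from an ultrafilter-large family of finite partial sums to the $F$-valued limit. Rank is defined via Archimedean equivalence in $F$, so one cannot naively push inequalities through equivalence classes in $F^*$. The honest way to do it is to convert rank statements into two-sided boundedness statements of the form $c_1 \cdot 1_{\underline{\alpha+1}^A} \leq |P(A) - f(S)| \leq c_2 \cdot 1_{\underline{\alpha+1}^A}$ holding for ultrafilter-many $S$, then invoke the same boundedness-preservation argument used in the preceding well-definedness proposition to conclude the same bounds for $A^r_\beta$ itself. This is essentially a transfer-style argument using the completeness of $F$ reflected into $F^*$, and once set up carefully it closes the induction.
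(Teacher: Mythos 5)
Your proposal follows essentially the same route as the paper's proof: the successor case via Proposition \ref{succrkprop} and the optimality of $A_\gamma$, and the limit case by reducing to $rk(A^r_\beta) \geq rk(A^r_{\alpha+1})$ and controlling the finite partial sums $f(S)$ for $S \in S_{\alpha+1} \in \mathcal{U}^*$ using the induction hypothesis at $\min(S)$. You are in fact somewhat more careful than the paper at the final step (passing the rank inequality from ultrafilter-many $S$ to $A^r_\beta$ itself via boundedness), which the paper simply asserts.
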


So there must for simple cardinality reasons be an ordinal $\zeta$ such that $rk(A^r_{\zeta}) = \underline{0}$. Then $\zeta$ is called the \emph{closure ordinal} for $A$. In a similar vein, the closure ordinal of $P$, denoted as $cl(P)$,  is defined as $$max\{\alpha : \alpha \textrm{ is the closure ordinal of some } A \subseteq \Omega \} \footnote{In fact, we have a notion of closure ordinal for each $r \in F$. So we could also define the closure of $P$ as $max\{\alpha : \alpha \textrm{ is the closure ordinal of some } r \in F\}$.}$$ This means that the $NAP$ functions on $\Omega$ yield an \emph{ordinal spectrum} that is determined by their closure ordinals.

\begin{theorem}\label{sumtheorem}

$$ P(A) = \sum_{\alpha < cl(A)} A_{\alpha} \cdot 1_{\underline{\alpha}^A} $$

\begin{proof}
Consider the closure ordinal $\zeta$ of $A$, which must exist by lemma \ref{vanish}. Then we have $$P(A) -  \sum_{\beta < \zeta}A_{\beta} \cdot 1_{\underline{\beta}^A} = 0. $$
\end{proof}
\end{theorem}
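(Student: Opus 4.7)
The plan is to invoke the closure ordinal $\zeta = cl(A)$ directly and show that the remainder $A^r_\zeta$ must vanish identically at this stage, at which point the identity is immediate from Definition \ref{remainderdef}.

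First I would verify that $cl(A)$ exists and is bounded by $\mu$. By Lemma \ref{vanish}, so long as $rk(A^r_\alpha) \neq \underline{0}$, the map $\alpha \mapsto rk(A^r_\alpha)$ is strictly increasing in the rank ordering on $\mathcal{R}$. Since $\mathcal{R}$ is a set (its cardinality is bounded by $|F|$), no such strictly increasing chain can be indexed by all ordinals below an arbitrarily large $\mu$; since $\mu$ was chosen with foresight to exceed $|\mathcal{R}|^+$, there must exist a least ordinal $\zeta \leq \mu$ with $rk(A^r_\zeta) = \underline{0}$, and this is the closure ordinal.

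Next I would note that rank $\underline{0}$ characterises $0$ uniquely in $F$: unpacking $a \approx_{rk} 0$ requires the existence of $r \in F$ with $0 \neq |st(r)| < \infty$ and $a = r \cdot 0$, forcing $a = 0$. Hence $A^r_\zeta = 0$. Combined with Definition \ref{remainderdef}, which reads $A^r_\zeta = P(A) - \sum_{\alpha < \zeta} A_\alpha \cdot 1_{\underline{\alpha}^A}$, a trivial rearrangement yields $P(A) = \sum_{\alpha < cl(A)} A_\alpha \cdot 1_{\underline{\alpha}^A}$.

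The main obstacle is not this final algebraic step but is rather that one must ensure the transfinite sum on the right makes sense as an element of $F$ at the ordinal $\zeta$. This reuses the boundedness argument already established for the well-definedness of $A^r_\beta$: one needs that the net $\{f(S) : S \in [\zeta]^{<\omega}\}$, where $f(\alpha) = A_\alpha \cdot 1_{\underline{\alpha}^A}$, remains bounded in $F$ on a set in $\mathcal{U}^*$, so that the canonical ceiling $\lceil \cdot \rceil_F$ from $F^*$ back to $F$ applies. Once this has been propagated through the inductive definition up to stage $\zeta$, the rest of the proof is just reading off the definitions.
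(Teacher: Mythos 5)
Your proposal is correct and follows essentially the same route as the paper's own (very terse) proof: existence of the closure ordinal $\zeta$ via Lemma \ref{vanish} together with a cardinality bound on $\mathcal{R}$, the observation that rank $\underline{0}$ forces $A^r_\zeta = 0$, and then rearranging Definition \ref{remainderdef}. You merely spell out the details (including well-definedness of the transfinite sum at stage $\zeta$) that the paper leaves implicit.
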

Of course we can then also express $ P(A)$ as  $ \sum_{\alpha < cl(P)} A_{\alpha} \cdot 1_{\underline{\alpha}^A} .$

The infinite sum $\sum_{\alpha < cl(A)} A_{\alpha} \cdot 1_{\underline{\alpha}^A} $ can be seen as a kind of \emph{Cantor normal form} for $P(A)$.\footnote{In a somewhat related (but also significantly different) context, the connection between a non-Archimedean notion of size and Cantor normal forms is explored in \cite[section 1.5]{Benci et al 2006}.}
The upshot of our discussion is that even though the structure $\mathcal{R}$ of the ranks is non-wellfounded, every probability value can be expressed as a well-founded infinite sum of components of increasing rank.


\subsection{Representation theorem}\label{rep2}

The infinite sum $\sum_{\alpha < cl(A)} A_{\alpha} \cdot 1_{\underline{\alpha}^A}$  can be seen as a \emph{lexicographic presentation} of $P(A)$. It can be used to define a lexicographical ordering:
\begin{definition}[lexicographic order]
$P(A) <_L P(B) \equiv$
for the smallest $\alpha$ such that $A_{\alpha} \cdot 1_{\underline{\alpha}^A} \neq B_{\alpha} \cdot 1_{\underline{\alpha}^B}$, we have $A_{\alpha} \cdot 1_{\underline{\alpha}^A} < B_{\alpha} \cdot 1_{\underline{\alpha}^B}$
\end{definition}
It is immediate that $<_L$ is a strict linear ordering.

Now we are ready to prove the main theorem of this section:
\begin{theorem}[representation theorem]\label{rep}
$$ \forall A,B \subseteq \Omega: P(A) < P(B)  \Leftrightarrow  P(A) <_L P(B) . $$

\begin{proof}

We know from theorem \ref{sumtheorem} that $P(A) = \sum_{\alpha < cl(P)} A_{\alpha} \cdot 1_{\underline{\alpha}^A} $   and $ P(B) =  \sum_{\alpha < cl(P)} B_{\alpha} \cdot 1_{\underline{\alpha}^B}  .$

\noindent ($\Rightarrow$) We are given that $P(A) < P(B)$. Let $\alpha$ be the first ordinal where $P(A)$ and $P(B)$ differ. Then we want to show that $$A_{\alpha} \cdot 1_{\underline{\alpha}^A} < B_{\alpha} \cdot 1_{\underline{\alpha}^B} .$$ 

Suppose, for a contradiction, that this is not the case, i.e., that $$A_{\alpha} \cdot 1_{\underline{\alpha}^A} > B_{\alpha} \cdot 1_{\underline{\alpha}^B} .$$ We will show that then, for all $S \in S_{\alpha}$:
$$  \sum_S  A_{\alpha} \cdot 1_{\underline{\alpha}^A} > \sum_S  B_{\alpha} \cdot 1_{\underline{\alpha}^B}.   $$

We aim to show this by an induction on $\textrm{min}(S)$.

\noindent 1. Suppose $\alpha = \textrm{min}(S)$. Then the property holds by lemma \ref{vanish}. 

\noindent 2. Suppose the property holds for all  $S \in S_{\alpha}$ such that $\textrm{min}(S) < \beta$. Then we want to show that the property also holds for all $S \in S_{\alpha}$ such that $\textrm{min}(S) = \beta$.

\noindent 2a. Suppose $\beta= \gamma +1$. For $A$, we know that $$ \sum_{\kappa < \beta} A_{\kappa} \cdot 1_{\underline{\kappa}^A} = \sum_{\kappa < \gamma} A_{\kappa} \cdot 1_{\underline{\kappa}^A} + A_{\gamma} \cdot 1_{\underline{\gamma}^A},$$ and similarly for $B$. So the property follows by the induction hypothesis and lemma \ref{vanish}.

\noindent 2b. $Lim(\beta)$. It suffices to show $$  \sum_{\kappa < \beta}  A_{\kappa} \cdot 1_{\underline{\kappa}^A} > \sum_{\kappa < \beta}  B_{\kappa} \cdot 1_{\underline{\kappa}^B}.$$    But by the induction hypothesis, this holds on all $S \in S_{\alpha}$, and $S_{\alpha} \in \mathcal{U}^*$, so this indeed holds also.

From this inductive argument we conclude that for all $S \in S_{\alpha}$, $$  \sum_S  A_{\alpha} \cdot 1_{\underline{\alpha}^A} > \sum_S  B_{\alpha} \cdot 1_{\underline{\alpha}^B}.   $$ So, since $S_{\alpha} \in \mathcal{U}^*$, $$ P(A) = \sum_{\alpha < cl(P)} A_{\alpha} \cdot 1_{\underline{\alpha}^A} > \sum_{\alpha < cl(P)} B_{\alpha} \cdot 1_{\underline{\alpha}^B} = P(B), $$ which gives us the required contradiction.

\noindent ($\Leftarrow$) This follows by a similar argument. It is given that $P(A) <_L P(B)$. Then there is a first $\alpha$ such that $$A_{\alpha} \cdot 1_{\underline{\alpha}^A} < B_{\alpha} \cdot 1_{\underline{\alpha}^B} .$$ Then we argue inductively that $$  \sum_{\alpha < cl(P)} A_{\alpha} \cdot 1_{\underline{\alpha}^A} < \sum_{\alpha < cl(P)} B_{\alpha} \cdot 1_{\underline{\alpha}^B} , $$
and we are done.
\end{proof}
\end{theorem}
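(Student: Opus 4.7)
} The plan is to establish the forward direction by contrapositive/contradiction and then observe that the backward direction follows by the same argument (both $<$ and $<_L$ are strict linear orderings, so it suffices to rule out the wrong strict inequality, or equivalently to prove one direction of the biconditional and use trichotomy). The core intuition is that Lemma \ref{vanish} guarantees that the ranks of the remainders $A^r_\beta$ strictly increase in $\beta$, so once $P(A)$ and $P(B)$ first disagree lexicographically at stage $\alpha$, every subsequent term of either sum is of strictly higher rank than the discrepancy at stage $\alpha$ and therefore cannot reverse the ordering at that stage.

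For the forward direction: assume $P(A) < P(B)$ and let $\alpha$ be the least ordinal at which $A_\alpha \cdot 1_{\underline{\alpha}^A}$ and $B_\alpha \cdot 1_{\underline{\alpha}^B}$ disagree. Suppose for contradiction that $A_\alpha \cdot 1_{\underline{\alpha}^A} > B_\alpha \cdot 1_{\underline{\alpha}^B}$. I would show by transfinite induction on $\beta$ with $\alpha < \beta \leq cl(P)$ that for every $S \in S_\alpha$ with $\min(S) \geq \beta$, $f_A(S) > f_B(S)$, where $f_A$, $f_B$ are the set-indexed functions from Definition \ref{sumdef} associated with $P(A)$ and $P(B)$. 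The base case $\min(S) = \alpha$ uses that the $\alpha$-th term dominates all later terms in $S$ by Lemma \ref{vanish} applied to both $A$ and $B$ (the later contributions in $S$ are of strictly higher rank than $1_{\underline{\alpha}^A}$ and $1_{\underline{\alpha}^B}$, hence cannot compensate for the $\alpha$-th difference).

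For the inductive step, the successor case is immediate from Proposition \ref{succrkprop}: adding a single term $A_\gamma \cdot 1_{\underline{\gamma}^A}$ of rank strictly higher than $\underline{\alpha}^A$ cannot overturn the inequality, and analogously for $B$. The limit case is the delicate one: here I would invoke the fact that $S_\alpha \in \mathcal{U}^*$ together with the inductive hypothesis applied cofinally, so that on an ultrafilter-large set of finite $S$'s the partial sum inequality persists, and by the definition of $\sum$ via the ultrafilter (Definition \ref{sumdef} and the surrounding construction) this passes to the limit sum.

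Having established the inequality on all $S \in S_\alpha$ and since $S_\alpha \in \mathcal{U}^*$, we obtain $\sum_{\gamma < cl(P)} A_\gamma \cdot 1_{\underline{\gamma}^A} > \sum_{\gamma < cl(P)} B_\gamma \cdot 1_{\underline{\gamma}^B}$, i.e.\ $P(A) > P(B)$, contradicting the assumption. The reverse direction is handled identically by swapping the roles of $A$ and $B$. The main obstacle I anticipate is the limit stage of the induction: I need to be careful that the "dominance" of the $\alpha$-th term is not eroded by accumulating transfinitely many small-but-positive corrections of higher rank, and this is precisely where the ultrafilter construction of $\sum$ together with Lemma \ref{vanish} must be used in tandem rather than independently.
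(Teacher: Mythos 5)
Your proposal follows essentially the same route as the paper's proof: argue the forward direction by contradiction, induct on $\min(S)$ over the finite sets $S \in S_{\alpha}$ with the base and successor cases handled via Lemma \ref{vanish} (and Proposition \ref{succrkprop}) and the limit case via $S_{\alpha} \in \mathcal{U}^*$, then pass to the transfinite sums through the ultrafilter definition to obtain the contradiction $P(A) > P(B)$. Your remark that the ($\Leftarrow$) direction can be extracted from trichotomy of the two strict linear orders is a marginally cleaner way to dispatch the converse than the paper's appeal to ``a similar argument,'' but the substance is identical.
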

Note that this implies that even though a choice of ultrafilter $\mathcal{U}^*$ was needed to define $<_L$, the resulting ordering is invariant with respect to this choice of ultrafilter.


\subsection{Discussion}

Let a sample space $\Omega$ be given. Then an ultrafilter on $\mathcal{P}_{fin}(\Omega)$ determines a $NAP$ function $P$ (with `uniform weight 1'). Different such $NAP$ functions may have different closure ordinals in the sense of section \ref{apx}. Intuitively, $NAP$ functions with larger closure ordinals may be taken to be `more complicated' than $NAP$ functions with smaller closure ordinals. Thus analysing the closure ordinals of $NAP$ functions would yield a \emph{classification} or \emph{spectral analysis} of $NAP$ functions (with uniform weight 1, on a fixed sample space). Thus it might be a worthwhile project to undertake a spectral analysis of $NAP$ functions.

The representation theorem that was proved in the previous section (theorem \ref{rep}) is related to a very general representation theorem that is announced in \cite[section 6]{Pedersen forthc.}. In this article, Pedersen extends De Finetti's fundamental theorem of comparative expectations to expectation orderings that satisfy a version of the \emph{principle of weak dominance} (rather than uniform simple dominance). He then announces that such systems of comparative expectations (finite and infinite) can be represented by an expectation function that takes values in a non-Archimedean field in which in which
every number can be written as formal well-founded power series in a single infinitesimal. This means, of course, that the elements in such a field are lexicographically ordered.


\section{Closing}

Representation theorems, like those of McGee \cite{McGee}, Van Fraassen \cite{VanFraassen}, Leitgeb \cite{Leitgeb}, and the two representation theorems in the present article, give us reasons to think that Popper functions capture a robust concept of probability. Popper functions cannot do anything that $NAP$ functions and lexicographical probability functions can't do. So if we believe any of these model our intuitive notion of probability well, then Popper functions must too. Though they may not fully cash out all our intuitions, for example about infinite additivity, they cannot go against them to any greater extent than these alternatives. 

Also, these representation theorems, which relate all the different models via Popper functions, give us reasons to believe that they all correctly represent our intuitions of probability, for the same reason that the equivalence of the different mathematical attempts at describing what an algorithm is gives us reason to believe we have captured that notion: there are many ways to be wrong about something, so it would seem unlikely that every time we attempted it we were wrong \emph{in the same way}. 


So we have better reasons to believe that each of these theories for probability really do model an intuitive concept. In particular, theories in other areas of philosophy which use Popper functions as representing probabilities gain more support from this representation as $NAP$ functions. 
$NAP$ gives us both a notion of how probabilities for events depend on the probabilities of the individual outcomes that make up those events, and also relates the conditional probability to the ratio of absolute probabilities in the familiar, intuitive way. So Popper functions are perhaps closer to our intuitions concerning probability than their initial axiomatic presentation may suggest.

But non-Archimedean probability functions in general, and $NAP$ functions in particular, are themselves not completely intuitive. One source of un-intuitiveness is the non-wellfoundedness of the degrees of infinitesimality that such probability functions entail. This seems to open the prospect that the possibility of one event can be smaller than that of another event even though there is no largest degree of infinitesimality at which they differ. Nonetheless, this turns out not to be the case. Our second representation theorem (section \ref{rep2}) shows that every $NAP$ function value can be represented as a well-founded power series of which each term represents the contribution of a specific degree of infinitesimality.

In sum, our conclusion is that Popper functions, $NAP$ functions, and lexicographical functions cohere well, not only on finite sample spaces, but also on infinite sample spaces.



\bigskip

\newpage

\end{document}